\newtheorem{theorem}{Theorem}[section]
\newtheorem{proposition}[theorem]{Proposition}
\newtheorem{lemma}[theorem]{Lemma}
\newtheorem{corollary}[theorem]{Corollary}
\theoremstyle{definition}
\newtheorem{definition}[theorem]{Definition}
\newtheorem{remark}[theorem]{Remark}
\newtheorem{example}[theorem]{Example}
\newtheorem{notation}[theorem]{Notation}
\newtheorem{step}{\normalfont\scshape Step}
\newcommand{\C}{\CMcal{C}}
\newcommand{\D}{\CMcal{D}}
\newcommand{\X}{\CMcal{X}}
\newcommand{\Y}{\CMcal{Y}}
\newcommand{\PSh}{\CMcal{P}}
\newcommand{\Z}{\CMcal{Z}}
\newcommand{\E}{\CMcal{E}}
\newcommand{\Sp}{\CMcal{S}}
\newcommand{\Lie}{\mathsf{Lie}}
\newcommand{\sk}{\mathsf{sk}}
\newcommand{\Grp}{\mathsf{Grp}}
\newcommand{\Mon}{\mathsf{Mon}}
\newcommand{\DDelta}{\mathbf{\Delta}}
\newcommand{\sSet}{\mathsf{sSet}}
\newcommand{\s}{\mathsf{s}}
\newcommand{\op}{\mathsf{op}}
\newcommand{\Set}{\mathsf{Set}}
\newcommand{\weak}{\overset{\sim}{\to}}
\newcommand{\longweak}{\overset{\sim}{\longrightarrow}}
\newcommand{\XX}{\underline{X}}
\newcommand{\colim}{\mathsf{colim}}
\newcommand{\conn}{\mathsf{conn}}
\newcommand{\id}{\mathsf{id}}
\newcommand{\ad}{\mathsf{ad}}
\newcommand{\mmin}{\mathsf{min}}
\newcommand{\fib}{\mathsf{fib}}
\newcommand{\cof}{\mathsf{cof}}
\newcommand{\Map}{\mathsf{Map}}
\newcommand{\N}{\mathbb N}
\newcommand{\EE}{\mathbb E}
\newcommand{\Alg}{\mathsf{Alg}}
\newcommand{\Fun}{\mathsf{Fun}}
\newcommand{\Cat}{\mathsf{Cat}}
\title{Hilton-Milnor's theorem in $\infty$-topoi.}
\author{\normalfont\scshape Samuel Lavenir}
\address{\normalfont EPFL, Lausanne}
\begin{document}

\maketitle

\begin{abstract}
    In this note we show that the classical theorem of Hilton-Milnor on finite wedges of suspension spaces remains valid in any $\infty$-topos. Our result relies on a version of James' splitting proved in \cite{EHP} and uses only basic constructions native to any model of $\infty$-categories. 
\end{abstract}

\tableofcontents

\section{Introduction}
The aim of this article is to give an $\infty$-categorical treatment of the decomposition theorem for wedges of suspensions first proved by Hilton \cite{hiltonoriginal} in the case of spheres, and generalized by Milnor \cite{milnor_fk} to arbitrary suspension spaces. The classical form of this theorem states that given pointed connected CW-complexes $X_1, \cdots, X_n$, there is a canonical homotopy equivalence $$
\prod_{w\in B(n)} \Omega \Sigma w(X_1, \cdots, X_n) \longweak \Omega \Sigma (X_1 \vee \cdots \vee X_n)
$$
where $B(n)$ denotes a basis for the free Lie algebra on $n$-generators. Several remarks are in order: first, given a word $w$ in this free Lie algebra, $w(X_1, \cdots, X_n)$ denotes the pointed CW-complex obtained by forming the smash product of the $X_i$ in the order indicated by $w$. Secondly, the topology on the product space is \textit{not} the product topology, but instead the \textit{weak topology} with respect to the filtration by all finite products. Lastly, it should be noted that the assumption that the $X_i$ are connected is essential and cannot be weakened.

Our main result is that the same statement holds true in any $\infty$-topos, in the sense of \cite{HTT,rezktopos, TV}. Aside from giving extended generality to the original statement, this new formulation makes the argument conceptually simpler; compare with traditional proofs \cite[Theorem~A]{hiltonoriginal}, \cite[Theorem~4.3.3]{neisendorfer_2010}, \cite[Theorem~6.6]{bookwhitehead}. Our proof involves only basic categorical manipulations, as well as the combinatorics of Hall words which we review in \Cref{hallsection} and an $\infty$-categorical version of James' splitting \cite{EHP}. The homotopical input is entirely hidden in the higher coherences involved in the formation of limits and colimits in $\infty$-categories. 

It should be noted that the contribution of this paper lies merely in the proof of Hilton-Milnor's theorem using higher categorical techniques, rather than its statement. Indeed, the result could be simply derived from the usual theorem in the $\infty$-category of spaces by the fact that an $\infty$-topos $\E$ admits a presentation
\[
\begin{tikzcd}
L: \PSh (\C) \ar[r, shift left] &  \E \ar[l, shift left, hook] :i
\end{tikzcd}
\]
as a left-exact localization of a presheaf $\infty$-category. Left-exactness of the left adjoint $L$ implies that it commutes with all the operations $\Omega, \Sigma, \wedge$ involved in the theorem, as well as with the weak product. It follows that the statement of the theorem in an arbitrary $\infty$-topos follows from the one in the $\infty$-category $\Sp$ of spaces. In this work, we do not assume Hilton-Milnor's theorem holds in spaces and prove it from first principles. The elementary form of our proof makes it a suitable reference for the reader familiar to $\infty$-categories and willing to take a modern approach on Hilton-Milnor's theorem.
\medskip

\noindent

{\bf Acknowledgments.} We are grateful to Hadrian Heine for pointing out that the statement of Hilton-Milnor's theorem in an arbitrary  $\infty$-topos can be deduced from the corresponding one in spaces. We warmly thank Nima Rasekh and Jérôme Scherer for precious advice and comments on earlier versions of this note.

\section{Categorical preliminaries}
\subsection{Notational conventions}
In this section we fix some conventions about our use of the language of $\infty$-categories. By an $\infty$-category we mean an $(\infty,1)$-category, ie. a weak $\omega$-category in which all $i$-morphisms are (weakly) invertible for $i>1$. Although we do not appeal to any such in the sequel, several equivalent models of $\infty$-categories are in use, amongst which the most common include  quasicategories, simplicial categories, complete Segal spaces, Segal categories and relative categories.  The sense in which all these models are equivalent is discussed extensively in the litterature, see for example \cite{bergnermodels} and \cite{joyaltierney}. All of these models support well-defined notions of composition of maps, limits and colimits, functors, natural transformations, adjunctions, Kan extensions and so on.

In this paper we will be almost exclusively working inside a fixed $\infty$-category. Throughout we will be using the following abuse of terminology : when we say that a diagram \textit{commutes} one should understand that there are homotopies filling all simplices of the diagram, and that these homotopies are \textit{specified} as part of the data of the diagram. We never explicitly mention these homotopies since in all cases of interest to us they are either uniquely determined (as in the case of compositions), or provided by a universal property (as in the case of (co)limits). When we speak of limits and colimits, we really mean their homotopical analogues (any other notion would be meaningless in this context). Similarly, we say that an arrow is an \textit{equivalence} (or that it is \textit{invertible}) if it is weakly invertible, and the adjective \textit{unique} means `up to a contractible space of choices'. For us a \textit{space} is really an $\infty$-groupoid, and the $\infty$-category of these is denoted by $\Sp$. When necessary, we implicitly view 1-categories as $\infty$-categories through the nerve embedding. 

If $\C$ is an $\infty$-category, we write $h\C$ for its \textit{homotopy category}, which is a 1-category. There is a projection functor $\C \longrightarrow h\C$ which is universal amongst functors from $\C$ to a 1-category. The (discrete) mapping spaces of $h\C$ are denoted $[X,Y]$, while those of $h\C_*$ are denoted $[X,Y]_*$. If $X\in \C$ is an object of $\C$ we write $\id_X$ for the identity map on $X$, or even just $\id$ when no confusion can arise. Also, we denote by $\PSh (\C)$ the $\infty$-category $\Fun(\C^\op, \Sp)$ of \textit{presheaves} (with values in $\infty$-groupoids) and by $\C^\to$ the arrow category $\Fun(\Delta^1, \C)$.

Let $\C$ be an $\infty$-category which has finite limits and colimits. We denote by $*\in \C$ a terminal object and write $\C_*=\C_{*/}$ for the category of pointed objects of $\C$, which is the coslice of $\C$ under $*$. The forgetful functor $\C_* \longrightarrow \C$ has a left adjoint $(-)_+$ which can be described on objects by $X_+=X \coprod *$. We write $\Map_{\C}(X,Y)$ for the space of morphisms between objects $X$ and $Y$ of $\C$, or simply $\Map(X,Y)$ when no confusion can arise. We also write $\Map_*(X,Y)$ for the space of maps in the pointed category $\C_*$. The space of pointed maps between objects $X,Y \in \C_*$ fits in the following pullback square of spaces
$$
\begin{tikzcd}
   \Map_*(X,Y)\arrow[dr, phantom, "\scalebox{1}{$\lrcorner$}" , very near start, color=black] \ar[r]\ar[d] & \Map(X,Y) \ar[d] \\
   * \ar[r] & \Map(*,Y)
\end{tikzcd}
$$
where the bottom horizontal map corresponds to the base point $* \longrightarrow Y$.

The coproduct of two objects $X,Y\in \C_*$ is denoted by a wedge $X\vee Y$ in order to distinguish it from the coproduct in the unpointed category which we denote by $X\sqcup Y$. Similarly, the coproduct of a family $\{X_i\}_{i\in I}$ of objects in $\C_*$ will be denoted by $\bigvee_{i\in I} X_i$.

Finally, when facing an adjoint situation 
\[
\begin{tikzcd}
L: \C \ar[r, shift left] &  \D \ar[l, shift left] :R
\end{tikzcd}
\]
the left adjoint is always depicted on top.

\subsection{Background on $\infty$-categories}
In this section, we introduce some terminology for working with $\infty$-categories, following \cite{HTT}. We also include several basic results about (co)limits and group objects in $\infty$-categories.

An $\infty$-category $\C$ is pointed if it has a zero object, which we always denote by $*$. If $\C$ is an $\infty$-category with a terminal object, then $\C_*$ is a pointed $\infty$-category. 

A sequence $$
\cdots \longrightarrow X_{n-1}\longrightarrow X_n \longrightarrow X_{n+1} \longrightarrow \cdots
$$ of maps in a pointed $\infty$-category $\C_*$ is said to be a \textit{fiber sequence} (resp. \textit{cofiber sequence}) if the squares $$
\begin{tikzcd}
X_{n-1} \ar[r]\ar[d] & X_n \ar[d] \\
* \ar[r] & X_{n+1} 
\end{tikzcd}
$$
are Cartesian (resp. coCartesian) for every $n$. Given a map $f:X\longrightarrow Y$ in a pointed $\infty$-category $\C$, we write $\fib(f)$ and $\cof(f)$ for its fiber and cofiber, so that $\fib(f) \longrightarrow X \longrightarrow Y$ is a fiber sequence and $ X \longrightarrow Y \longrightarrow \cof(f)$ is a cofiber sequence. These assemble to give functors $\fib, \cof: \C^\to \longrightarrow \C$ on the arrow category of $\C$. 

% In a commutative square 
% $$
% \begin{tikzcd}
%    X \ar[r]\ar[d] & Y \ar[d] \\
%    Z \ar[r] & T
% \end{tikzcd}
% $$
% we write $X\simeq Z \times_T Z$ when the square is Cartesian, and $T\simeq Y \sqcup_X Z$ when the square is coCartesian, leaving implicit the data of the maps involved.

\begin{lemma}\cite[Lemma~4.4.2.1]{HTT}\label{pasting}
Let $\C$ be an $\infty$-category with pushouts. Suppose that $$
\begin{tikzcd}
 X \arrow[dr, phantom, "\scalebox{1}{$\ulcorner$}" , very near end, color=black]\ar[r]\ar[d] & Y \ar[d] \ar[r] & Z \ar[d] \\
X' \ar[r] & Y' \ar[r] & Z'  
\end{tikzcd}
$$
is a commutative diagram in $\C$ where the left square is coCartesian. 
Then the right square is coCartesian if and only if the total rectangle is coCartesian.
\end{lemma}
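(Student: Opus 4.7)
My plan is to pass from pushouts in $\C$ to pullbacks in $\Sp$ via the Yoneda embedding, and then invoke the corresponding dual pasting statement for pullbacks of spaces. The detection principle I rely on is that a square in $\C$ is coCartesian if and only if, for every test object $W \in \C$, the contravariant functor $\Map(-, W): \C^\op \longrightarrow \Sp$ carries it to a Cartesian square of spaces. Applying this criterion to each of the three coCartesian assertions in the statement---one hypothesis, two candidate conclusions---reduces the whole lemma to an assertion about pullbacks of spaces, uniformly in $W$.

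Concretely, applying $\Map(-, W)$ to the given rectangle yields the stacked diagram
$$
\begin{tikzcd}
\Map(Z', W) \ar[r] \ar[d] & \Map(Z, W) \ar[d] \\
\Map(Y', W) \ar[r] \ar[d] & \Map(Y, W) \ar[d] \\
\Map(X', W) \ar[r] & \Map(X, W)
\end{tikzcd}
$$
whose bottom square is Cartesian by the hypothesis on the left square. The original right square is coCartesian if and only if the top square above is Cartesian for every $W$, and similarly the original outer rectangle is coCartesian if and only if the outer rectangle above is Cartesian for every $W$. It therefore suffices to establish the dual assertion inside $\Sp$: given a vertically stacked diagram of spaces whose bottom square is Cartesian, the top square is Cartesian if and only if the outer rectangle is.

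To verify this dual statement I would manipulate universal properties directly. The bottom pullback identifies $\Map(Y', W)$ with $\Map(X', W) \times_{\Map(X, W)} \Map(Y, W)$; substituting this identification into the iterated pullback $\Map(Z, W) \times_{\Map(Y, W)} \Map(Y', W)$ and cancelling the repeated factor $\Map(Y, W)$ produces the pullback $\Map(Z, W) \times_{\Map(X, W)} \Map(X', W)$ that presents the outer rectangle. The main subtlety I anticipate is making the cancellation of iterated pullbacks rigorous in the $\infty$-categorical setting, so that the comparison maps are coherent equivalences in $\Sp$ rather than only pointwise invertible on $h\Sp$; this is the standard content of the pullback pasting lemma in $\Sp$, which can either be argued by direct inspection of the limit cones or cited from \cite{HTT}.
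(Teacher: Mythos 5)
The paper does not actually prove this lemma; it is imported verbatim as \cite[Lemma~4.4.2.1]{HTT}, where Lurie's argument proceeds directly inside $\C$ by comparing colimits over the two relevant index categories (via left Kan extension along the inclusion of the span for the left square into the shape of the whole rectangle). Your route is genuinely different and correct in outline: the representability criterion you invoke --- a square in $\C$ is coCartesian iff $\Map(-,W)$ sends it to a Cartesian square of spaces for every $W$ --- is valid and reduces all three coCartesian assertions uniformly to the dual pasting statement for pullbacks in $\Sp$. What this buys is that one only ever has to verify the pasting law in the single, concrete $\infty$-category $\Sp$, where explicit models of homotopy pullbacks are available; what it costs is that the residual step is not quite free. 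Be aware that your ``cancellation of iterated pullbacks'' $\Map(Z,W)\times_{\Map(Y,W)}\bigl(\Map(Y,W)\times_{\Map(X,W)}\Map(X',W)\bigr)\simeq \Map(Z,W)\times_{\Map(X,W)}\Map(X',W)$ is not an independent fact you can appeal to --- it \emph{is} the pullback pasting lemma, so as written the last paragraph restates the goal rather than proving it, and citing it from \cite{HTT} would be circular in spirit since that is the very lemma under discussion. To close the argument honestly you should either verify the $\Sp$-case by hand (e.g.\ by identifying both sides with the limit over the full subcategory of the rectangle's shape spanned by $X, Y, Z, X'$ and checking that the two ways of computing this limit by decomposition agree, which is a cofinality argument), or accept that at some point a direct argument about limit cones is unavoidable. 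With that one step supplied, your proof is complete and is a legitimate alternative to Lurie's.
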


\begin{comment}
 \begin{proposition}[Fubini]
Let $\C$ be a category with small colimits. Suppose that $F:I\times J \longrightarrow C$ is a diagram. 
Then the natural comparison morphisms $$
\CMcal {\underset{I}{colim} \: \underset{J}{colim} \:F(i,j) \overset{\sim}{\longleftarrow} \underset{I\times J}{colim} \: F \overset{\sim}{\longrightarrow} \underset{J}{colim} \: \underset{I}{colim} \:F(i,j)
}
$$
are equivalences in $C$.
\end{proposition}   
\end{comment}

\begin{proposition}\cite[Lemma~5.5.2.3]{HTT}\label{fubini}
    Let $\C$ be an $\infty$-category with pushouts. Suppose that $$
\begin{tikzcd}
 X  &  X' \ar[l] \ar[r] &  X''   \\
 Z \ar[d]\ar[u] &  Z' \ar[r]\ar[d]\ar[l]\ar[u] & Z'' \ar[d]\ar[u] \\
Y &  Y' \ar[l] \ar[r] &  Y''   
\end{tikzcd}
$$
is a commutative diagram in $\C$. Then the colimit of the above diagram exists and is equivalent to the colimit of both following diagrams : 
$$
\begin{aligned}
 X \coprod_{Z} Y \longleftarrow X' \coprod_{Z'} Y' \longrightarrow X'' \coprod_{Z''} Y''  \\
 X \coprod_{X'} X'' \longleftarrow Y \coprod_{Y'} Y'' \longrightarrow Z \coprod_{Z'} Z''.
\end{aligned}
$$
\end{proposition}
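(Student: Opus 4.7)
The plan is to recognize the $3 \times 3$ diagram as a functor $F \colon K \times K \to \C$, where $K$ denotes the span shape $\bullet \leftarrow \bullet \rightarrow \bullet$, and to reduce the statement to the interchange of iterated colimits. Since the colimit of a diagram of shape $K$ is by definition a pushout, the first span in the statement is precisely the functor $K \to \C$ obtained by taking pushouts column-wise in $F$, while the second is the one obtained by taking pushouts row-wise. The two asserted equivalences then follow from the commutation of colimits with colimits applied to the two orders of iteration.

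To make the interchange precise, I would use the standard fact that the colimit functor $\colim_I \colon \Fun(I, \C) \to \C$ is left adjoint to the constant diagram functor whenever $I$-indexed colimits exist in $\C$, and therefore preserves all colimits in its source. In particular, for any second indexing category $J$ and any diagram $F \colon I \times J \to \C$ admitting the relevant pointwise colimits, one obtains canonical equivalences
$$\colim_J \colim_I F \;\simeq\; \colim_{I \times J} F \;\simeq\; \colim_I \colim_J F.$$
Applying this to $I = J = K$ with $F$ the given $3 \times 3$ diagram yields the two equivalences of the proposition, and the existence of the total colimit is automatic from either iterated expression.

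The main obstacle is justifying the Fubini-style commutation of colimits with full $\infty$-categorical coherence: in ordinary category theory one can check it on the nose using mapping-set arguments, but in the $\infty$-categorical setting one must track all higher homotopies. The cleanest way to do this is to realize $\colim_{I \times J} F$ as the left Kan extension of $F$ along the terminal projection $I \times J \to *$, factor that projection as $I \times J \to I \to *$, and then use transitivity of left Kan extensions together with the pointwise formula for left Kan extension along $I \times J \to I$. This produces the first iterated formula, and the second follows by exchanging the roles of $I$ and $J$.
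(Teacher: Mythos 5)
Your argument is correct, but there is nothing in the paper to compare it against: the proposition is quoted from Lurie's \emph{Higher Topos Theory} and used as a black box, so you are supplying a proof where the paper supplies only a citation. What you write is the standard Fubini argument, and it is sound; two points just deserve to be made explicit. First, identifying the left Kan extension of $F$ along the projection $p\colon I\times J\to I$ with the pointwise colimit $i\mapsto \colim_J F(i,-)$ is not literally the pointwise formula: that formula computes the value at $i$ as a colimit over the slice $(I\times J)\times_I I_{/i}\simeq I_{/i}\times J$, and one must invoke cofinality of the inclusion $\{\id_i\}\times J\hookrightarrow I_{/i}\times J$ (valid because $I_{/i}$ has a terminal object) to reduce to a colimit over $J$. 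This cofinality step is the one place where a genuine $\infty$-categorical verification, rather than formal adjunction bookkeeping, is required, and it is also exactly where the hypothesis that $\C$ has pushouts is used, since it guarantees that the intermediate Kan extension along $K\times K\to K$ exists pointwise before one can even speak of the total colimit. Second, note that your proof yields the row-wise span with apex $Z\coprod_{Z'}Z''$ (the pushout of the middle row), whereas the proposition as printed writes the second span with $Y\coprod_{Y'}Y''$ in the middle; this is a typo in the statement, and your argument proves the corrected version. With the cofinality step spelled out, your proof is complete and is in substance the argument carried out in the cited reference.
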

% \begin{proof}
%     This is a consequence of \cite[Proposition~4.4.2.2]{HTT} for $K=\Span \times \Span$. Here $\Span$ denotes the simplicial set $\Delta^1 \sqcup_{\Delta^{\{0\}}} \Delta^1$ and the decomposition $K=K'\sqcup_{L'} L$ is taken to be $$
%     (\Span \times \Delta^1) \coprod_{\Span \times \Delta^{\{0\}}} (\Span \times \Delta^1).
%     $$
%     The conclusion follows from the fact that the inclusion $$
%     \Span \times \Delta^{\{1\}} \subseteq \Span \times \Delta^1
%     $$
%     is cofinal.
% \end{proof}

\begin{definition}
    An $\infty$-category $\C$ is said to have \textit{universal pushouts} if \begin{enumerate}
        \item all pullbacks and pushouts exist in $\C$
        \item for any $f:X\longrightarrow Y$ in $\C$, the pullback functor $f^*: \C_{/Y} \longrightarrow \C_{/X}$ preserves pushouts.
    \end{enumerate}
\end{definition}
\begin{remark}
    Since colimits in $\C_{/X}$ are computed in the underlying category $\C$ after forgetting the maps to $X$, an $\infty$-category $\C$ with all pushouts and pullbacks has universal pushouts if and only if pullbacks and pushouts commute in the following sense: given a pushout square in $\C$ as on the left and any map $f:f^*D\longrightarrow D$, the resulting square on the right, obtained by pulling back along $f$
    $$\begin{tikzcd}
    A \arrow[dr, phantom, "\scalebox{1}{$\ulcorner$}" , very near end, color=black]\ar[d]\ar[r] & B \ar[d] \\
    C \ar[r] & D
    \end{tikzcd}
    \:\:\:\:\:\:\:\:\:\:\:\:\:\:\:\:\:\:\:\:\:\:\:\:\:
    \begin{tikzcd}
        f^*A \arrow[dr, phantom, "\scalebox{1}{$\ulcorner$}" , very near end, color=black]\ar[d]\ar[r] & f^*B \ar[d] \\
        f^*C \ar[r] & f^*D
    \end{tikzcd}
    $$
    is again a pushout in $\C$. In symbols this gives $$
    f^*(B\sqcup_A C)\simeq f^*B \sqcup_{f^*A} f^*C.
    $$ 
%     In other words, for every commutative cube 
%     $$
%     \begin{tikzcd}[row sep=small, column sep=small]
%         & f^* A \arrow[dl] \arrow[rr] \arrow[dd] & & f^*B \arrow[dl] \arrow[dd] \\
%         f^* C \arrow[rr, crossing over] \arrow[dd] & & f^*D \\
%         & A \arrow[dl] \arrow[rr] & & B \arrow[dl] \\
%         C \arrow[rr] & & D \arrow[from=uu, crossing over, "f", near start]\\
%     \end{tikzcd}
%     $$
% in which the bottom face is a pushout and all vertical faces are pullbacks, the top face must also be a pushout. 
This is exactly the \textit{first cube lemma} which was proved by M. Mather in \cite{mather_1976} for homotopy colimits of spaces. Using the identification \cite[Theorem~4.2.4.1]{HTT} of colimits in an $\infty$-category $\C$ with homotopy colimits in a presentation of $\C$ by a fibrant simplicial category, this can be interpreted as a proof that the $\infty$-category $\Sp$ of spaces has universal pushouts. See \cite[Lemma~2.6]{EHP} for a complete proof of the fact that $\C$ has universal pushouts if and only if it satisfies Mather's first cube lemma.
\end{remark}

The following `descent' statement is a typical case where universal pushouts come useful in practice. It is an analogue of Ganea's result for `fibers of cofibers' \cite{Ganea}. 

\begin{proposition}\label{descent}
    Let $\C$ be an $\infty$-category with universal pushouts, and let $$
    \begin{tikzcd}
    X \ar[d] & Z \ar[d] \ar[r]\ar[l] & Y \ar[d] \\
    T \ar[r, equals] & T \ar[r, equals] & T  
    \end{tikzcd}
    $$
    be a diagram in $\C_*$. Then there is a canonical equivalence $$
    \fib\bigg(X\coprod_Z Y \to T\bigg) \simeq \fib(X \to T) \coprod_{\fib(Z \to T)} \fib(Y \to T).
    $$
\end{proposition}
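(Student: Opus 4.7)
The plan is to realize the equivalence as a direct instance of the universal pushout property of $\C$, applied to the basepoint inclusion $* \to T$. First, I would use the universal property of the pushout $X \coprod_Z Y$: the three coherent maps $X \to T$, $Y \to T$, $Z \to T$ supplied by the diagram assemble into a canonical structure map $X \coprod_Z Y \to T$. This promotes the pushout square with vertices $Z, X, Y, X\coprod_Z Y$ to a pushout square living in the slice $\C_{/T}$, with the prescribed vertical maps to $T$.

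Next, I would base change the whole square along the basepoint $* \to T$. By the formulation of universal pushouts recorded in the remark above (that is, Mather's first cube lemma), the resulting square
$$
\begin{tikzcd}
* \times_T Z \ar[r]\ar[d] & * \times_T Y \ar[d] \\
* \times_T X \ar[r] & * \times_T (X \coprod_Z Y)
\end{tikzcd}
$$
is again coCartesian in $\C$.

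Finally, I would identify each corner with a fiber: in a pointed $\infty$-category, the pullback $* \times_T W$ of a map $W \to T$ along the basepoint is by definition $\fib(W \to T)$. Substituting these identifications into the coCartesian square above yields precisely the claimed canonical equivalence.

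I do not anticipate any serious obstacle in executing this plan; it is essentially a single invocation of universal pushouts. The only mild subtlety is the preliminary step of producing the structure map $X \coprod_Z Y \to T$ via the universal property so that the pushout square can legitimately be regarded as living over $T$ before base change; afterwards everything follows formally from the definitions of universal pushouts and of fibers.
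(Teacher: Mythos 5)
Your proposal is correct and is exactly the paper's argument: the paper likewise obtains the equivalence by applying the universal pushout property (Mather's first cube lemma) to the base change along $* \to T$ and identifying the resulting corners as fibers, noting that pushouts and pullbacks in $\C_*$ are computed in $\C$. Your write-up just makes explicit the preliminary step of producing the structure map $X \coprod_Z Y \to T$, which the paper leaves implicit.
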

\begin{proof}
    This is just an incarnation of the cube lemma stated above, in the case where $f$ is the map $*\to T$. The conclusion follows from the fact that pushouts and pullbacks in the pointed category $\C_*$ can be computed in $\C$. 
\end{proof}

\subsection{Presentability and topoi}

In this section we recast basic definitions and facts about presentable $\infty$-categories and $\infty$-toposes. 

An $\infty$-category $\E$ is \textit{presentable} if it arises as an accessible localization of the $\infty$-category of presheaves on a small $\infty$-category. In other words, there exists a small $\infty$-category $\C$ and an adjunction
\[
\begin{tikzcd}
L: \PSh (\C) \ar[r, shift left] &  \E\ar[l, shift left, hook] :i
\end{tikzcd}
\]
where $i$ is fully faithful and $L$ is an accessible functor (ie. commutes with $\kappa$-filtered colimits for some regular cardinal $\kappa$). In this situation, we say that $L:\PSh(\C) \to \E$ is an accessible localization functor.

\begin{definition}
    An $\infty$-category $\E$ is an \textit{
    $\infty$-topos} if there exists a small $\infty$-category $\C$ and an accessible left-exact localization functor $\PSh(\C) \longrightarrow \E$.
\end{definition}

An $\infty$-topos is in particular a presentable $\infty$-category and hence admits all small limits and colimits (\cite[Proposition~5.5.2.4]{HTT}). Higher toposes are the $\infty$-categorical analogues of \textit{Grothendieck toposes}. There is also a notion of \textit{elementary $\infty$-topos} that is developped in \cite{elementary}. The essential difference is that, as in the 1-categorical case, an elementary $\infty$-topos need not be presentable. In general, an elementary $\infty$-topos only has finite limits and colimits. Since the presentability assumption will be crucial in this work, we will consider only $\infty$-toposes in the sense of Lurie.

\begin{example}
    The following are examples of $\infty$-toposes: \begin{enumerate}
        \item The $\infty$-category $\Sp$ of spaces is an $\infty$-topos as it is the $\infty$-category of presheaves on the terminal category $*$.
        \item More generally, any presheaf $\infty$-category $\PSh (\C)$ is an $\infty$-topos.
        \item If $(\C, \CMcal T)$ is an $\infty$-site (ie. $\C$ is a small $\infty$-category and $\CMcal T$ is a Grothendieck topology on the homotopy category $h\C$), then $\mathrm{Sh}_{\CMcal T}(\C)$ is an $\infty$-topos. Contrary to the 1-categorical case, not every $\infty$-topos arises this way. In particular, from the data of a 1-category $\C$ and a Grothendieck topology $\CMcal T$ on $\C$, there is an $\infty$-topos $\mathrm{Sh}_\infty (\C, \CMcal T)$ of higher sheaves on $\C$, whose full subcategory of 0-truncated objects is the usual 1-topos $ \mathrm{Sh}(\C, \CMcal T)$ of sheaves on $(\C, \CMcal T)$ \cite[Theorem~6.4.1.5]{HTT}.
    \end{enumerate}
\end{example}

\begin{remark}
    Any $\infty$-topos has universal pushouts. In fact, any locally Cartesian closed $\infty$-category has universal pushouts. This follows from the fact that in such categories, the base change functors are left adjoints and hence preserves small colimits.
\end{remark}

\subsection{Group objects in $\infty$-categories.}

We denote by $\mathbf \Delta$ (a skeleton of) the category of non-empty finite linearly ordered sets and non-decreasing maps. The object $\{0< 1< \dots< n \}$ is written $[n]$. We view $\mathbf{\Delta} $ as an $\infty$-category through the nerve embedding $\Cat\hookrightarrow \Cat_\infty$. If $\C$ is an $\infty$-category, we write $s\C$ for the $\infty$-category $\mathrm{Fun}(\mathbf{\Delta}^\mathrm{op}, \C)$ of simplicial objects in $\C$. For a simplicial object $X_\bullet\in \s\C$, we denote by $X_n$ the value of $X_\bullet$ at $[n]$. 

Any bicomplete $\infty$-category is tensored and cotensored over $\Sp$. This (co)tensoring can be restricted to $\Set\simeq \tau_{\leq 0} \Sp \subseteq \Sp$ yielding formulas $$
X\otimes S = \coprod_{s\in S}X \:\:\:\:\:\:\:\:\:\:\:\: X^S=\prod_{s\in S}X
$$
for any set $S$ and object $X\in \C$.

\begin{definition}
    Let $X_\bullet\in \s\C$ be a simplicial object in $\C$. Given a simplicial set $K\in \sSet$, define an object $X(K)\in \C$ by the following end formula $$
    X(K)=\int_{[n]\in \DDelta} X_n^{K_n}.
    $$
    This assignment extends to a functor $\s\C \times \sSet^{\op}\longrightarrow \C$. 
\end{definition}
\begin{remark}
    Fixing the first variable $X\in \s\C$, the resulting functor $X(-):\sSet^\op \longrightarrow \C$ is the right Kan extension of $X$ along the Yoneda embedding $\DDelta \subseteq \sSet$.
\end{remark}
\begin{definition}\label{Segal}
    Let $X_\bullet \in \s\C$ be a simplicial object. \begin{enumerate}
        \item We say that $X_\bullet$ is a \textit{Segal object} if for each $n\geq 0$ and $0<i<n$, the inner horn inclusion $\Lambda^n_i \subseteq \Delta^n$ induces an equivalence $X(\Delta^n)\longweak X(\Lambda^n_i)$. 
        \item[] A \textit{monoid object} in $\C$ is a Segal object $X\in \s\C$ for which $X_0\simeq *$.
        \item We say that $X_\bullet$ is a \textit{grouplike Segal object} if for each $n\geq 0$ and $0\leq i \leq n$, the horn inclusion $\Lambda^n_i \subseteq \Delta^n$ induces an equivalence $X(\Delta^n)\longweak X(\Lambda^n_i)$. 
        \item[] A \textit{group object} in $\C$ is a grouplike Segal object $X\in \s\C$ for which $X_0\simeq *$.
    \end{enumerate}
    Denote by $\Grp(\C)\subseteq \Mon(\C)\subseteq \s\C$ the full subcategories on group objects and monoid objects respectively.
\end{definition}

\begin{remark}
    The Segal condition is more commonly formulated in terms of the so called \textit{Segal maps} $$
    X_n \longrightarrow X_1 \times_{X_0} \cdots \times_{X_0} X_1,
    $$
    which are induced by the various inclusions $[1] \to [n]$. These maps can be neatly expressed as being the images of the inclusions $$
    \Delta^1 \coprod_{\Delta^0} \cdots \coprod_{\Delta^0} \Delta^1 = \sk_1 \Delta^n \subseteq \Delta^n
    $$
    under the functor $X(-)$.
\end{remark}

    Monoids in $\C$ are $\EE_1$-algebra objects in the Cartesian monoidal category $\C^\times$. In \cite{HA} the category $\Mon(\C)$ is denoted $\Alg_{\EE_1}(\C)$. Similarly, group objects in $\C$ are the group-like $\EE_1$-algebra objects in $\C$, sometimes named $\EE_1$-groups. 

    When $\C$ is taken to be a $1$-category, monoid and group objects in $\C$ reduce to the usual notion of monoid and group objects in $\C$. Moreover, if $M_\bullet$ is a group object in $\C$, the set of homotopy classes $[X, M_1]$ comes equipped with a group structure which is natural in the object $X\in \C$.

\begin{remark}
The extended Segal conditions imply the Segal conditions, so that group objects in $\C$ are in particular monoid objects in $\C$. There are forgetful functors $\Grp(\C)\longrightarrow \Mon(\C) \longrightarrow \C_*$ where the rightmost arrow is given on objects by $M_\bullet \mapsto (* \overset{s_0}{\longrightarrow}M_1)$.
\end{remark}

\begin{remark}
For a monoid object $M_\bullet$ in $\C$, the object $M_1$ is the underlying object of $M_\bullet$. The multiplication on $M$ is the face map $d_1: M_1\times M_1 \longrightarrow M_1$ while the unit is given by the degeneracy $s_0: *\longrightarrow M_1$. These forgetful functors all admit left adjoints which are the corresponding free constructions. These left adjoints are informally given by 
$$
X \mapsto \coprod_{n\in \N} X^{\wedge n} \:\:\:\:\:\:\:\:\:\:\:\: M_\bullet \mapsto \Omega |M_\bullet|
$$
for the free monoid on a pointed object and the group completion of a monoid respectively.
\end{remark}

The following splitting result is stated without proof in \cite{EHP}.

\begin{proposition}\label{split}
Let $\C$ be a category with finite limits, and let $$F_\bullet \overset{i_\bullet}{\longrightarrow} E_\bullet \overset{p_\bullet}{\longrightarrow} B_\bullet$$ be a fiber sequence in $\Grp(\C)$. For any section $s:B_1 \longrightarrow E_1$ of the map $p_1$ in $\C_*$, the composite $$\begin{tikzcd}
F_1 \times B_1 \ar[r, "i_1\times s_1"] & E_1 \times E_1 \ar[r, "d_1"] & E_1
\end{tikzcd}
$$
is an equivalence.
\end{proposition}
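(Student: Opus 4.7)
The plan is to exhibit the claimed equivalence as a base change of an auxiliary Cartesian square expressing ``right-translation by $s$'' in the group $E_\bullet$. Writing $R := d_1 \circ (\id_{E_1} \times s) : E_1 \times B_1 \to E_1$, I first prove that
\[
\begin{tikzcd}
E_1 \times B_1 \ar[r, "R"] \ar[d, "p_1 \times \id"] & E_1 \ar[d, "p_1"] \\
B_1 \times B_1 \ar[r, "d_1"] & B_1
\end{tikzcd}
\]
is Cartesian, where the bottom arrow is the multiplication in $B_\bullet$. Granting this, base change along the map $(*, \id_{B_1}) : B_1 \to B_1 \times B_1$ yields the statement: the pullback of $p_1 \times \id$ is $F_1 \times B_1$ since $F_1 \simeq \fib(p_1)$, the top arrow restricts to $d_1 \circ (i_1 \times s)$, and the bottom composite $d_1 \circ (*, \id_{B_1})$ equals $\id_{B_1}$ by the unit axiom in $B_\bullet$. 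A commuting square whose bottom arrow is the identity is Cartesian if and only if its top arrow is an equivalence.

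To prove Cartesian-ness of the displayed square, I factor it horizontally as
\[
\begin{tikzcd}
E_1 \times B_1 \ar[r, "\tilde R"] \ar[d, "p_1 \times \id"] & E_1 \times B_1 \ar[r, "\pi_1"] \ar[d, "p_1 \times \id"] & E_1 \ar[d, "p_1"] \\
B_1 \times B_1 \ar[r, "\tilde d_1"] & B_1 \times B_1 \ar[r, "\pi_1"] & B_1
\end{tikzcd}
\]
with $\tilde R := (R, \pi_2)$ and $\tilde d_1 := (d_1, \pi_2)$. The right square is trivially Cartesian, being the product of $p_1$ with $\id_{B_1}$. For the left square, $\tilde d_1$ is the shear $(d_1, d_0)_{B_\bullet} \circ (d_2, d_0)_{B_\bullet}^{-1}$ and is an equivalence by the grouplike Segal conditions of \Cref{Segal}: the inner horn $\Lambda^2_1$ provides the Segal equivalence and the outer horn $\Lambda^2_2$ the grouplike one. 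Similarly $\tilde R$ is an equivalence, with explicit inverse $(e, b) \mapsto (d_1(e, \nu_E s(b)), b)$ built from the inversion map $\nu_E$ of $E_\bullet$; the two composites collapse to the identity via associativity, inverse, and unit axioms in $E_\bullet$. Since any commuting square whose two parallel arrows are equivalences is automatically Cartesian, the left square is a pullback, and pasting of pullback squares concludes the argument.

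The main obstacle is the $\infty$-categorical verification that $\tilde R$ is an equivalence. Element-wise the required identities are $(e \cdot s(b)^{-1}) \cdot s(b) \simeq e$ and its dual, but making these precise coherently requires extracting the relevant 2- and 3-cells from the grouplike Segal structure on $E_\bullet$. In particular, the inversion $\nu_E$ arises from the outer-horn equivalence on $E_2$ applied to $(\id_{E_1}, *)$, and the associativity of $d_1$ is encoded in the Segal condition on $E_3$; these are routine verifications but constitute the only genuine coherence work in the argument.
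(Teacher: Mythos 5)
Your argument is correct and reaches the conclusion by a genuinely different route from the paper. The paper's proof extends the fiber sequence to the left, observes that the connecting map $\partial\colon \Omega B_1 \to F_1$ is nullhomotopic because $\Omega p_1$ admits the section $\Omega s$, and then applies $[Z,-]_*$ to obtain a split short exact sequence of ordinary groups, concluding by the standard algebraic splitting together with the Yoneda lemma in $h\C_*$. You instead exhibit $d_1\circ(i_1\times s)$ as the base change along $(*,\id)\colon B_1\to B_1\times B_1$ of a ``translation'' square built from $R=d_1\circ(\id\times s)$, and reduce its Cartesianness, via the pasting law for pullbacks, to the invertibility of the two shear maps $\tilde d_1$ and $\tilde R$. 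This is a clean geometric reformulation --- translation by the section trivializes $p_1$ over $B_1$ --- and it avoids both the Puppe extension of the fiber sequence and the exactness argument for the sequence of homotopy classes; what it buys in conceptual transparency it pays for in having to invert $\tilde R$ directly. That deferred step is indeed the crux, and your explicit inverse via $\nu_E$ does work, but note that the coherence you worry about can be discharged entirely in the homotopy category by exactly the device the paper uses for its final map: the grouplike Segal structure makes $[Z,E_1]_*$ a group naturally in $Z$, the induced map $\tilde R_*$ on $[Z,E_1]_*\times[Z,B_1]_*$ is visibly a bijection for every $Z$, and the Yoneda lemma in $h\C_*$ then shows $\tilde R$ is an equivalence --- no higher homotopies need to be produced. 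Two small points worth recording explicitly: the outer square commutes because $p_\bullet$ is a simplicial map and $p_1 s\simeq\id$, so that $p_1\circ d_1\simeq d_1\circ(p_1\times p_1)$; and the identification of the pullback of $p_1\times\id$ along $(*,\id)$ with $F_1\times B_1$ uses that the fiber sequence in $\Grp(\C)$ is computed levelwise, so that $F_1\simeq\fib(p_1)$ in $\C_*$.
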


\begin{proof} The sequence extends to a fiber sequence of pointed objects 
$$\begin{tikzcd}
\Omega E_1 \ar[r, "\Omega p_1"] & \Omega B_1 \ar[r, "\partial"] & F_1 \ar[r, "i_1"] & E_1 \ar[r, "p_1"] & B_1
\end{tikzcd}
$$
where the map $\Omega p_1 : \Omega E_1 \longrightarrow \Omega B_1$ has a section given by $\Omega s$. It follows that the map $\partial$ in the above sequence is nullhomotopic, since $\partial \simeq \partial \circ (\Omega p_1)\circ (\Omega s)\simeq *$, and hence the induced map on homotopy classes of maps is zero. Taking homotopy classes of maps gives a short exact sequence of honest groups $$
%\begin{tikzcd}
%0 \ar[r] & h \C_*(Z,F)\ar[r, "i_*"] & h \C_*(Z,E) \ar[r, "p_*"] & h \C_*(Z,B) \ar[r] & 0
%\end{tikzcd}
0 \longrightarrow [Z,F_1]_*\overset{(i_1)_*}{\longrightarrow} [Z,E_1]_* \overset{(p_1)_*}{\longrightarrow} [Z,B_1]_* \longrightarrow 0
$$
with a set-theoretical section. By a standard argument, the composite map 
\[
\begin{tikzcd}
{[Y,F_1]_*\times [Z, B_1]_*} \ar[rr, "(i_1)_*\times (s_1)_*"] && {[Z,E_1]_* \times [Z, E_1]_*} \ar[r, "(d_1)_*"] & {[Z,E_1]_*}
\end{tikzcd}
\]
is a bijection of sets. Since $(d_1)_*\circ ((i_1)_*\times s_*)\simeq (d_1\circ (i_1\times s))_*$ in the arrow category of $h\C_*$, it follows by the Yoneda lemma that the map $d_1\circ (i_1\times s)$ is an isomorphism in $h\C_*$. The result follows since maps that are sent to isomorphisms by the projection $\C_* \longrightarrow h\C_*$ onto the homotopy category are (by definition) equivalences in $\C_*$.
\end{proof}

\section{Homotopy theory in $\infty$-categories}
% \textcolor{red}{This section is maybe a bit long. One could possibly shorten a couple of proofs as Nima suggested, but one could also leave it as it is if you don't have precise references for your proofs, and then maybe cut the section into two parts? The first one would contain basic constructions and properties in $\infty$-categories, the second one would be about connectivity?}

We write $\mathbf{\Delta}_a$ for the \textit{augmented simplex category}, defined as the category of all (possibly empty) finite linearly ordered sets and non-decreasing maps. There is an obvious inclusion $\mathbf{\Delta}\subseteq \mathbf{\Delta}_a$. 

%and an equivalence of categories $\mathbf{\Delta}_a \simeq \mathbf{\Delta}^\triangleleft = [0]*\mathbf{\Delta}$.

\subsection{Smash product and half-smash product.}

In this section $\C$ denotes an $\infty$-category with finite limits and colimits.

\begin{definition}
\begin{enumerate}
    \item The \textit{suspension} of an object $ X\in \C$ is defined as the pushout 
    $$
    \begin{tikzcd}
        X \arrow[dr, phantom, "\scalebox{1}{$\ulcorner$}" , very near end, color=black]\ar[r]\ar[d] & * \ar[d] \\
        * \ar[r] & \Sigma X  .
    \end{tikzcd}
    $$
    \item[] The suspension operation defines a functor $\Sigma:  \C \longrightarrow \C$.
    \item The \textit{loop object} of an object $X\in \C_*$ is defined as the pullback 
    \[
    \begin{tikzcd}
        \Omega X \arrow[dr, phantom, "\scalebox{1}{$\lrcorner$}" , very near start, color=black]\ar[r]\ar[d] & * \ar[d] \\
        * \ar[r] & X  .
    \end{tikzcd}
    \]
    \item[] The looping operation defines a functor $\Omega:  \C_* \longrightarrow \C_*$.
    \item The \textit{half-smash product} of $X \in \CMcal {C}$ and $Y\in \CMcal {C_*}$ is defined as the pushout $$
    \begin{tikzcd}
        X \arrow[dr, phantom, "\scalebox{1}{$\ulcorner$}" , very near end, color=black]\ar[r, "{(1, y)}"]\ar[d] & X\times Y \ar[d] \\
        * \ar[r] & X \ltimes Y  .
    \end{tikzcd}
    $$
    \item[] The half-smash product defines a functor $\ltimes: \CMcal {C\times C_* \longrightarrow C_*}$.
    \item The \textit{smash product} of $X,Y \in \CMcal {C_*}$ is defined as the pushout 
    $$
    \begin{tikzcd}
        X\vee Y \arrow[dr, phantom, "\scalebox{1}{$\ulcorner$}" , very near end, color=black]\ar[r]\ar[d] & X\times Y \ar[d] \\
        * \ar[r] & X \wedge Y  .
    \end{tikzcd}
    $$
    \item[] This defines a functor $\wedge : \C_* \times \C_* \longrightarrow \C_*$.
    \item[] The \textit{join} of $X,Y\in C$ is defined by the pushout $$
    \begin{tikzcd}
        X\times Y \arrow[dr, phantom, "\scalebox{1}{$\ulcorner$}" , very near end, color=black]\ar[r]\ar[d] & X \ar[d] \\
        Y \ar[r] & X \star Y  .
    \end{tikzcd}
    $$ 
    \item[] The join defines a functor $\star:\C_* \times \C_* \longrightarrow \C_*$.
\end{enumerate}
\end{definition}
     
\begin{remark}
    When $\C$ is Cartesian closed, the half-smash product defines a left-tensoring of $\C$ over $\C_*$ in the following sense: for any objects $X\in \C_*$ and $Y \in \C$, the functors $X \ltimes -$ and $-\ltimes Y$ admit right adjoints given respectively by $\Map(X,-)$ and $\Map_*(Y,-)$, seen as functors to pointed objects by assigning the constant map as a basepoint for both $\Map(X,Z)$ and $\Map_*(X,Z)$.  
\end{remark}

\begin{proposition}\label{symmon}
    Let $\C$ be a Cartesian closed $\infty$-category. The smash product is part of a closed symmetric monoidal structure on $\C_*$. The unit is given by $S^0=*\sqcup *$. 
    
    In particular, the functor $\wedge : \C_* \times \C_* \longrightarrow \C_*$ preserves colimits in both variables.
\end{proposition}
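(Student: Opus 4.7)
The plan is to verify each ingredient of the closed symmetric monoidal structure on $\C_*$ separately -- symmetry, unit, associativity, and closure -- by extracting the necessary natural equivalences from the universal properties of the pushouts defining $\wedge$, and using Cartesian closedness of $\C$ to guarantee that the functors $(-\times Z)$ preserve all colimits. Symmetry is automatic: the pushout $* \leftarrow X \vee Y \to X \times Y$ is manifestly symmetric in its two inputs, so the symmetry of $\times$ and $\vee$ on $\C_*$ descends to a natural equivalence $X \wedge Y \simeq Y \wedge X$.

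For the unit, I would compute $S^0 \wedge X \simeq X$ directly. Since $-\times X$ preserves colimits, $S^0 \times X \simeq X \sqcup X$ in $\C$, while an elementary pushout computation yields $S^0 \vee X \simeq * \sqcup X$ with inclusion into $X \sqcup X$ picking out one copy of $X$ and the basepoint of the other copy. An application of the pasting lemma \Cref{pasting} then collapses the extraneous basepoint and identifies the resulting cofiber with $X$, natural in $X$. For associativity, I would combine Cartesian closedness (so that $-\times Z$ preserves the cofiber defining $X \wedge Y$) with the Fubini-type result \Cref{fubini} to re-express $(X \wedge Y) \wedge Z$ as the total cofiber of a three-dimensional diagram governed by $X \times Y \times Z$ and the fat-wedge subobject glued from the three pairwise products $X \times Y$, $Y \times Z$, $X \times Z$ along the common $X \vee Y \vee Z$. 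Because this colimit description is symmetric in the three variables, it is canonically equivalent to $X \wedge (Y \wedge Z)$, yielding the associator.

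Closure is the easiest part: applying $\Map_*(-, Z)$ to the defining pushout of $X \wedge Y$ turns it into a pullback of mapping spaces, which unwinds via the internal hom of $\C$ to $\Map_*(X, \Map_*(Y, Z))$, exhibiting $\Map_*(Y, -)$ as right adjoint to $-\wedge Y$. The genuine obstacle is not any single identification but the coherent packaging of a full symmetric monoidal $\infty$-category structure carrying pentagon, hexagon and all higher data. These coherences are implicit in the universal properties of the above pushouts, but a rigorous construction typically proceeds through Lurie's machinery -- e.g. by recognizing the free-pointing functor $(-)_+:\C \to \C_*$ as a left adjoint that one can promote to a symmetric monoidal functor from $(\C,\times)$ uniquely (up to contractible choice), thereby transporting the Cartesian structure to the desired smash structure on $\C_*$.
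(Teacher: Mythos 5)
Your homotopy-level verifications (symmetry of the defining pushout, the unit computation for $S^0$, associativity via the total cofiber over the fat wedge, and the adjunction $-\wedge Y \dashv \Map_*(Y,-)$) are all sound, and you correctly identify that none of this is the real content of the proposition: the issue is producing the coherent symmetric monoidal $\infty$-category, not the one-step identifications. The gap is that your proposed mechanism for those coherences does not work as stated. One cannot ``transport'' the Cartesian structure along $(-)_+:\C\to\C_*$: a monoidal structure descends along a functor only when that functor is, say, a reflective localization compatible with the tensor, and $(-)_+$ is neither essentially surjective nor a localization --- its right adjoint, the forgetful functor $\C_*\to\C$, is not fully faithful. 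Moreover, saying that $(-)_+$ ``can be promoted to a symmetric monoidal functor uniquely up to contractible choice'' already presupposes a symmetric monoidal structure on the target, so as a construction the argument is circular. (The variant of this idea that does work, identifying $\C_*\simeq\C\otimes\Sp_*$ and invoking idempotence of $\Sp_*$, requires presentability, which the proposition does not assume.)

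The paper resolves the coherence problem by a different localization. Cartesian closedness makes $\times$ colimit-preserving in each variable, so \cite[Proposition~2.11]{gla} equips the arrow category $\C^\to=\Fun(\Delta^1,\C)$ with a Day convolution symmetric monoidal structure; the cofiber functor $\cof:\C^\to\longrightarrow\C_*$ exhibits $\C_*$ as a reflective localization of $\C^\to$ compatible with that structure, so $\C_*$ inherits a closed symmetric monoidal structure, and \cite[Proposition~2.4.2]{abstractmotivic} identifies the induced tensor on objects with $\wedge$. If you want to keep your outline, replace your final sentence with this Day-convolution-plus-localization step (or another genuinely coherent construction); your object-level formulas for the unit, symmetry and associator then follow from that identification rather than needing separate proofs.
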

\begin{proof}
    Under the assumption that $\C$ is Cartesian closed, the Cartesian product $\C \times \C \longrightarrow \C$ preserves colimits in both variables and \cite[Proposition~2.11]{gla} ensures the existence of a symmetric monoidal structure on $\C^\to=\Fun(\Delta^1, \C)$ given by Day convolution. As a result, $\C_*$ inherits a symmetric monoidal structure since $\cof:\C^\to \longrightarrow \C_*$ exhibits $\C_*$ as a reflective localization of $\C^\to$. It is then proved in \cite[Proposition~2.4.2]{abstractmotivic} that this induced monoidal structure is given on objects by the smash product $\wedge$.
\end{proof}

\begin{remark}
The smash product fits into the following pasting of pushout squares $$
\begin{tikzcd}
Y \arrow[dr, phantom, "\scalebox{1}{$\ulcorner$}" , very near end, color=black]\ar[r]\ar[d] & X \times Y \arrow[dr, phantom, "\scalebox{1}{$\ulcorner$}" , very near end, color=black] \ar[d] \ar[r] & X\ltimes Y \ar[d] \\
* \ar[r] & X \rtimes Y \ar[r] & X \wedge Y   
\end{tikzcd}
$$
where the right square is coCartesian. Indeed, note that in the following diagram $$\begin{tikzcd}
X \ar[d]  &  * \ar[l]\ar[d] \ar[r] &  Y \ar[d]  \\
X\times Y \ar[d] & X\times Y \ar[r]\ar[d, equals]\ar[l] &  X\times Y \ar[d] \\
X \ltimes Y & X\times Y \ar[l] \ar[r] & X \rtimes Y 
\end{tikzcd}
$$
all the columns are cofiber sequences. By \ref{fubini} the induced maps on colimits of the rows form a cofiber sequence $X\vee Y \longrightarrow X\times Y \longrightarrow X\wedge Y$, whence 
$$
X\wedge Y \simeq \colim(X\rtimes Y \leftarrow X\times Y \longrightarrow X\ltimes Y).
$$
\end{remark}

\begin{lemma}\label{join}
Let $\C$ be an $\infty$-category with finite products and pushouts, and let $X,Y \in \C_*$ be pointed objects of $\C$. There is a natural equivalence of pointed objects
$$
X \star Y \simeq \Sigma (X \wedge Y).
$$
\end{lemma}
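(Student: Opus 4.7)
The plan is to apply the iterated colimit result of \Cref{fubini} to a carefully chosen $3\times 3$ diagram in $\C$. Specifically, I would consider the commutative diagram
\[
\begin{tikzcd}
X & X \times Y \ar[l, "\pi_1"'] \ar[r, "\pi_2"] & Y \\
X \ar[u, equals] \ar[d] & X \vee Y \ar[l, "r_X"'] \ar[r, "r_Y"] \ar[u, "\iota"] \ar[d] & Y \ar[u, equals] \ar[d] \\
* & * \ar[l, equals] \ar[r, equals] & *
\end{tikzcd}
\]
where $\iota \colon X \vee Y \to X \times Y$ is the canonical inclusion, $r_X$ and $r_Y$ are the pointed retractions, and unlabelled arrows are the unique maps to $*$. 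Commutativity of each of the four $2\times 2$ subsquares is immediate from the identities $\pi_j \circ \iota = r_j$ for $j = 1,2$ together with the uniqueness of maps into $*$.

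Computing the colimit by column pushouts first: the outer columns are of the form $X \xleftarrow{\id} X \to *$ and collapse to $*$, while the middle column is precisely the defining pushout of $X \wedge Y$. The resulting horizontal span $* \leftarrow X \wedge Y \to *$ has pushout $\Sigma(X \wedge Y)$. Computing by row pushouts first: the top row is the defining pushout of $X \star Y$, the bottom row yields $*$ trivially, and the middle row produces an auxiliary pushout $W := X \sqcup_{X \vee Y} Y$ formed along the retractions. Granted that $W \simeq *$, the row-first colimit reduces to $X \star Y$, and combining both computations via \Cref{fubini} delivers the desired equivalence.

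The remaining task is to identify $W$ with $*$, which I would establish via \Cref{pasting}. The square
\[
\begin{tikzcd}
X \ar[r, "\iota_X"] \ar[d, "!"'] & X \vee Y \ar[d, "r_Y"] \\
* \ar[r, "*_Y"'] & Y
\end{tikzcd}
\]
is a pushout, realising $Y$ as the quotient $(X \vee Y)/X$. Pasting it on the left of the defining pushout square of $W$ produces an outer rectangle whose top edge is the composite $r_X \circ \iota_X = \id_X$ and whose left edge is $X \to *$; the pushout of such a span is $*$, and the pasting lemma then identifies this pushout with $W$. This retraction-based trick is the main subtlety of the argument; everything else is a formal manipulation of colimits.
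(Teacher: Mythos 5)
Your proof is correct and follows essentially the same route as the paper: both apply \Cref{fubini} to a $3\times 3$ diagram (yours is the transpose of the paper's) whose two iterated pushouts give $\Sigma(X\wedge Y)$ and $X\star Y$, with the crux in each case being that $\colim(X \leftarrow X\vee Y \to Y)\simeq *$, established via \Cref{pasting}. The only difference is cosmetic: the paper verifies this collapse with a single grid of coCartesian squares, while you do it in two pasting steps through the identification $\cof(X\to X\vee Y)\simeq Y$.
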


\begin{proof}
Consider the following commutative diagram in $\C_*$
\[
\begin{tikzcd}
*  &  X \ar[l] \ar[r, equals] &  X   \\
* \ar[d, equals]\ar[u, equals] & X\vee Y \ar[r]\ar[d]\ar[l]\ar[u] &  X\times Y \ar[d]\ar[u] \\
* & Y \ar[l] \ar[r, equals] & Y  
\end{tikzcd}
\]
where all non-identity maps are projections. To compute its colimit one can first take colimits of the rows. We find the diagram $* \leftarrow X\wedge Y \longrightarrow *$ whose colimit is $\Sigma(X \vee Y)$. We could also take colimits of the columns first, which yields $* \leftarrow * \longrightarrow X \star Y$. The fact that $\mathrm{colim}(X \leftarrow X\vee Y \longrightarrow Y)\simeq *$ is explained by repeatedly applying the pasting law \ref{pasting} to  prove that in the following diagram$$
\begin{tikzcd}
* \arrow[dr, phantom, "\scalebox{1}{$\ulcorner$}" , very near end, color=black]\ar[r]\ar[d] & X \arrow[dr, phantom, "\scalebox{1}{$\ulcorner$}" , very near end, color=black] \ar[d] \ar[r] & * \ar[d] \\
Y \arrow[dr, phantom, "\scalebox{1}{$\ulcorner$}" , very near end, color=black] \ar[r]\ar[d] & X\vee Y \arrow[dr, phantom, "\scalebox{1}{$\ulcorner$}" , very near end, color=black] \ar[r]\ar[d] & Y \ar[d] \\  
* \ar[r] & X \ar[r] & *
\end{tikzcd}
$$
all the squares are coCartesian.
\end{proof}

\begin{proposition}
Let $\C$ be an $\infty$-category with finite products and universal pushouts. For any pointed object $X\in \C_*$, there is a natural equivalence of pointed objects
$$
\Sigma X \simeq X \wedge S^1.
$$
\end{proposition}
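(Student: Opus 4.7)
The plan is to reduce the statement to a stronger intermediate identity: for any pointed object $Y$, there is a natural equivalence $X\wedge \Sigma Y \simeq \Sigma(X\wedge Y)$. Applied to $Y = S^0$, and combined with the unit equivalence $X\wedge S^0 \simeq X$, this yields the proposition via $X\wedge S^1 = X\wedge \Sigma S^0 \simeq \Sigma(X\wedge S^0) \simeq \Sigma X$, naturally in $X$.

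To establish the intermediate identity I would apply \Cref{fubini} to the $3\times 3$ diagram
\[
\begin{tikzcd}
* & X \ar[l] \ar[r, equals] & X \\
* \ar[u, equals] \ar[d, equals] & X\vee Y \ar[l] \ar[r] \ar[u] \ar[d] & X\times Y \ar[u] \ar[d] \\
* & X \ar[l] \ar[r, equals] & X
\end{tikzcd}
\]
in which the vertical maps out of the middle row are the natural projections onto $X$ induced by $Y\to *$. The three row colimits are $*$, $X\wedge Y$, and $*$, so the pushout of the resulting vertical span produces $\Sigma(X\wedge Y)$. The three column colimits are $*$, $X\vee \Sigma Y$, and $X\times \Sigma Y$: the middle exploits the fact that coproducts in $\C_*$ commute with pushouts, while the right uses the hypothesis of universal pushouts to conclude that $X\times -$ commutes with the pushout defining $\Sigma Y$. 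The pushout of the resulting horizontal span is then $X\wedge \Sigma Y$ by definition, and \Cref{fubini} identifies the two total colimits.

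For the unit property $X\wedge S^0 \simeq X$, the approach is to compute the defining pushout $(X\times S^0)\sqcup_{X\vee S^0} *$ directly. Universal pushouts applied to $S^0 = *\sqcup *$ yield $X\times S^0 \simeq X\sqcup X$, while inspection of the defining pushout of the pointed coproduct gives $X\vee S^0\simeq X\sqcup *$. Tracing the inclusion $X\vee S^0\to X\times S^0$ (the identity onto the first factor of $X\sqcup X$ on the $X$-summand, and the basepoint of the second factor on the $*$-summand), the final pushout collapses the first copy of $X$ together with the basepoint of the second copy to a single point, leaving the second copy of $X$ with that collapsed point as its basepoint; that is, $X$.

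The main obstacle is this unit step: classically $X\wedge S^0\simeq X$ is extracted from the symmetric monoidal structure of \Cref{symmon}, whose hypothesis of Cartesian closedness is strictly stronger than what we are assuming. The hand computation above depends on the distributivity $X\times (*\sqcup *)\simeq X\sqcup X$, which in turn requires the initial object to be preserved by $X\times -$; this is automatic in any $\infty$-topos and more generally whenever $\C$ has universal coproducts, so the hypothesis is not genuinely restrictive in the settings of interest, but it needs to be addressed explicitly to make the pushout manipulation legitimate.
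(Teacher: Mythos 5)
Your proof is correct, and at bottom it is the same computation as the paper's: setting $Y=S^0$ in your $3\times 3$ diagram recovers the paper's diagram verbatim, since $X\vee S^0\simeq X_+$ and $X\times S^0\simeq X\sqcup X$. The difference is organizational. The paper runs \Cref{fubini} directly at $Y=S^0$, reading $\Sigma X$ off the rows and $X\wedge S^1$ off the columns, and absorbs what you call the unit law into the middle-row computation $\colim(*\leftarrow X_+\to X\sqcup X)\simeq X$, justified by the pasting lemma \Cref{pasting}; that computation is literally a proof that $X\wedge S^0\simeq X$. You instead keep $Y$ general, obtaining the interchange $X\wedge\Sigma Y\simeq\Sigma(X\wedge Y)$ first and specializing afterwards. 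This buys you the corollary following the proposition under the hypotheses of finite products and universal pushouts alone, whereas the paper's proof of that corollary routes through the symmetric monoidal structure of \Cref{symmon} and hence assumes Cartesian closedness; the cost is the separate unit-law step, which you handle correctly (your column-two identification via commuting the wedge past the pushout, and your column-three identification via universality, both check out, the former also being obtainable from \Cref{pasting} together with \Cref{cof}). Finally, the distributivity $X\times(*\sqcup *)\simeq X\sqcup X$ that you rightly flag as requiring universality of coproducts rather than merely of pushouts is used just as silently in the paper's own proof, both in asserting $X\times S^0\simeq X\sqcup X$ and in identifying the last column's colimit with $X\times S^1$; so this is a shared, harmless elision in the settings of interest rather than a defect of your argument.
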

\begin{proof} Consider the following commutative diagram of pointed objects in $\C$ 
$$
\begin{tikzcd}
*  &  X \ar[l] \ar[r, equals] &  X   \\
* \ar[d, equals]\ar[u, equals] & X_+ \ar[r]\ar[d]\ar[l]\ar[u] &  X\sqcup X \ar[d]\ar[u] \\
* & X \ar[l] \ar[r, equals] & X  
\end{tikzcd}
$$
where $X_+=X\sqcup *$ is $X$ with a disjoint basepoint, the two maps $X\sqcup X \longrightarrow X$ are both the codiagonal $\nabla_X = (1,1)$ on $X$ and the map $X_+ \longrightarrow X$ is induced by $\id_X$ and the base point $x:*\longrightarrow X$ by the universal property of coproducts in $\C$. Taking for $S^0=*\sqcup *$ any choice of basepoint, there are equivalences of pointed objects $X\times S^0\simeq X\sqcup X$ and $X\vee S^0 \simeq X_+$, where $X\sqcup X$ is given the basepoint corresponding to the one chosen for $S^0$. Note that $S^1=\Sigma S^0\simeq \colim(* \leftarrow S^0 \to *)$. 

Note also that $X\simeq \colim(* \leftarrow X_+ \to X\sqcup X)$. This follows from \ref{pasting} applied to 
$$\begin{tikzcd}
* \arrow[dr, phantom, "\scalebox{1}{$\ulcorner$}" , very near end, color=black]\ar[r]\ar[d] & X_+ \arrow[dr, phantom, "\scalebox{1}{$\ulcorner$}" , very near end, color=black] \ar[d] \ar[r] & * \ar[d] \\
X \ar[r] & X\sqcup X \ar[r] & X  
\end{tikzcd}
$$
where the bottom composite is the identity $\id_X$.
Taking colimits of the rows we obtain the diagram $* \leftarrow X \to *$ because $X\times -$ commutes with (unpointed) colimits. Taking colimits of the columns we find $* \leftarrow X\vee S^1 \to  X \times S^1$. The result follows from \ref{fubini}.
\end{proof}

\begin{corollary}
Let $\C$ be an $\infty$-category with finite products and universal pushouts. For any pointed objects $X,Y \in \C_*$, there are natural equivalences of pointed objects
$$
\Sigma X \wedge Y \simeq \Sigma (X\wedge Y) \simeq X \wedge \Sigma Y .
$$
\end{corollary}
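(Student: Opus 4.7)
The plan is to establish $\Sigma X \wedge Y \simeq \Sigma(X \wedge Y)$ by a direct Fubini argument in the spirit of the preceding proposition, and then to deduce $X \wedge \Sigma Y \simeq \Sigma(X \wedge Y)$ from the natural symmetry $X \wedge Y \simeq Y \wedge X$, which is automatic from the symmetry of products and coproducts appearing in the defining pushout of $\wedge$.

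For the first equivalence, I would consider the $3 \times 3$ diagram in $\C_*$
\[
\begin{tikzcd}
* & Y \ar[l] \ar[r, equals] & Y \\
* \ar[u, equals] \ar[d, equals] & X \vee Y \ar[u] \ar[d] \ar[l] \ar[r] & X \times Y \ar[u] \ar[d] \\
* & Y \ar[l] \ar[r, equals] & Y
\end{tikzcd}
\]
in which the vertical maps $X \vee Y \to Y$ are the projections collapsing the $X$-summand, the vertical maps $X \times Y \to Y$ are $\pi_Y$, and the middle horizontal map is the canonical comparison $X \vee Y \to X \times Y$.

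Computing row colimits first, the top and bottom rows each contract to $*$ since one of their maps is an identity, while the middle row has pushout $X \wedge Y$ by definition; the induced span $* \leftarrow X \wedge Y \to *$ thus has pushout $\Sigma(X \wedge Y)$. Computing column colimits instead, the first column gives $*$; the second gives $Y \sqcup_{X \vee Y} Y \simeq \Sigma X \vee Y$, since the functor $(-) \vee Y$ commutes with pushouts (coproducts commute with colimits in each variable); and the third gives $Y \sqcup_{X \times Y} Y \simeq \Sigma X \times Y$, which is exactly what the universal pushouts hypothesis delivers when applied to $\Sigma X = * \sqcup_X *$ and pulled back along $Y \to *$. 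The induced span $* \leftarrow \Sigma X \vee Y \to \Sigma X \times Y$ has pushout $\Sigma X \wedge Y$, and \Cref{fubini} equates the two iterated colimits.

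The only step requiring some care is the identification of the column colimits with $\Sigma X \vee Y$ and $\Sigma X \times Y$, but both reduce to formal facts already at our disposal. The second equivalence $\Sigma(X \wedge Y) \simeq X \wedge \Sigma Y$ then follows by applying the argument with $X$ and $Y$ swapped and composing with the symmetry $X \wedge Y \simeq Y \wedge X$.
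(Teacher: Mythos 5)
Your proof is correct, but it takes a different route from the paper. The paper deduces the corollary in one line from \Cref{symmon} (the smash product as a symmetric monoidal structure on $\C_*$) together with the immediately preceding proposition $\Sigma X \simeq X\wedge S^1$, writing $(\Sigma X)\wedge Y \simeq S^1 \wedge X\wedge Y \simeq \Sigma(X\wedge Y)$. You instead run a direct $3\times 3$ argument via \Cref{fubini}, in the same style as the paper's proofs of \Cref{join} and of $\Sigma X \simeq X \wedge S^1$ itself: the column identifications $Y\sqcup_{X\vee Y} Y\simeq \Sigma X\vee Y$ (coproducts commute with pushouts) and $Y\sqcup_{X\times Y} Y\simeq \Sigma X\times Y$ (universality of pushouts applied to $\Sigma X = *\sqcup_X *$ along $Y\to *$) are both sound, and the induced span $*\leftarrow \Sigma X\vee Y\to \Sigma X\times Y$ is the canonical comparison by naturality, so its pushout is $\Sigma X\wedge Y$ as you claim; the row colimits give $\Sigma(X\wedge Y)$. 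What your approach buys is self-containedness and a closer fit to the stated hypotheses: the corollary assumes only finite products and universal pushouts, and that is literally all you use, whereas \Cref{symmon} is stated for Cartesian closed $\infty$-categories and rests on external input (Day convolution and the identification of the induced monoidal product with $\wedge$). What the paper's approach buys is brevity and the full strength of associativity and commutativity in one package, which is used elsewhere (e.g.\ in \Cref{words}). Your derivation of $X\wedge\Sigma Y\simeq \Sigma(X\wedge Y)$ from the symmetry of the defining pushout of $\wedge$ is also fine.
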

\begin{proof} This follows from the associativity and commutativity of the smash product \Cref{symmon}: 
$$
(\Sigma X )\wedge Y \simeq   S^1\wedge X \wedge Y \simeq \Sigma (X \wedge Y).
$$
\end{proof}

\begin{proposition}\label{cof}
    Let $\C$ be an $\infty$-category with pushouts. If $f:X\longrightarrow Y$ is a nullhomotopic map in $\C_*$, then there is an equivalence of pointed objects 
    $$\cof(f)\simeq \Sigma X \vee Y$$
    which is natural in $f\in \C^\to$.
\end{proposition}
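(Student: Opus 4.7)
The plan is to realize $\cof(f)$ as the outer rectangle of a pasting of two pushout squares, exploiting the fact that a nullhomotopic $f$ factors through the zero object. First I would observe that a nullhomotopy of $f$ provides a factorization $X \to * \to Y$ whose composite is equivalent to $f$, since $f$ is by hypothesis equivalent to the constant map $0$ in $\Map_*(X,Y)$, and the constant map is literally this composite.

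Given such a factorization, I would consider the commutative diagram
\[
\begin{tikzcd}
X \ar[r] \ar[d] & * \ar[r] \ar[d] & Y \ar[d] \\
* \ar[r] & \Sigma X \ar[r] & \Sigma X \vee Y
\end{tikzcd}
\]
in $\C_*$, where the right vertical is the wedge inclusion. The left square is coCartesian by the definition of $\Sigma X$, and the right square is coCartesian because $\Sigma X \vee Y$ is by definition the coproduct of $\Sigma X$ and $Y$ in $\C_*$, which may be computed as the pushout $\Sigma X \sqcup_* Y$. By \Cref{pasting}, the outer rectangle is then coCartesian. Its top edge is the composite $X \to * \to Y$, which is equivalent to $f$, and its left edge is the canonical map $X \to *$; therefore its pushout is, by definition of the cofiber, equivalent to $\cof(f)$. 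This yields the desired equivalence $\cof(f) \simeq \Sigma X \vee Y$.

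For naturality in $f \in \C^\to$, every step of the construction (choice of nullhomotopy, formation of the two constituent pushouts, invocation of the pasting lemma) is functorial in the data $(f, h)$ of a morphism together with a nullhomotopy. The equivalence therefore assembles into a natural transformation between functors defined on the $\infty$-category of such pairs, which projects to the full subcategory of $\C^\to$ on nullhomotopic arrows. The only subtlety, and the one place any real bookkeeping is needed, is to articulate this naturality precisely in the $\infty$-categorical language of straightening a nullhomotopy witness as part of the input datum; the pasting argument itself is entirely formal and reduces to a single application of \Cref{pasting}.
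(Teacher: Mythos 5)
Your proposal is correct and is essentially identical to the paper's own proof: factor the nullhomotopic $f$ through the zero object, paste the defining pushout square of $\Sigma X$ with the square exhibiting $\Sigma X \vee Y$ as $\Sigma X \sqcup_* Y$, and apply \Cref{pasting} to identify the outer rectangle with the cofiber square of $f$. The paper states this more tersely and does not elaborate on the naturality bookkeeping, but your treatment of that point is consistent with what is intended.
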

\begin{proof} Being nullhomotopic $f$ factors as $X\longrightarrow * \longrightarrow Y$. Now apply the Pasting law \ref{pasting} to the following diagram$$
\begin{tikzcd}
X \arrow[dr, phantom, "\scalebox{1}{$\ulcorner$}" , very near end, color=black]\ar[r]\ar[d] & * \arrow[dr, phantom, "\scalebox{1}{$\ulcorner$}" , very near end, color=black] \ar[d] \ar[r] & Y \ar[d] \\
* \ar[r] & \Sigma X \ar[r] & \Sigma X \vee Y.   
\end{tikzcd}
$$
\end{proof}

\begin{proposition}\label{presentability}
    Suppose given an adjoint pair of functors 
    \[
    \begin{tikzcd}
    L: \Y \ar[r, shift left] &  \X \ar[l, shift left, hook] :j
    \end{tikzcd}
    \]
    in which the right adjoint $j$ is fully faithful.
    \begin{enumerate}
        \item There is an induced adjoint pair between categories of pointed objects
        $$
        \begin{tikzcd}
        L_*: \Y_* \ar[r, shift left] &  \X_* \ar[l, shift left, hook] :j_*
        \end{tikzcd}
        $$
        in which $j_*$ is fully faithful.
        \item If $L$ is left exact, then $L_*$ is also left-exact and commutes with smash products:  
        $$
        L_*(X\wedge Y)\simeq L_*X \wedge L_*Y
        $$ for any pointed objects $X,Y \in \Y_*$.
    \end{enumerate}
\end{proposition}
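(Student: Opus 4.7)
The plan is to construct $j_*$ and $L_*$ by hand, verify the adjunction by computing pointed mapping spaces, and then in part (2) exploit left-exactness of $L$ to identify $L_*$ with (essentially) $L$ and conclude compatibility with the smash product from its pushout presentation.

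For part (1), I first observe that $j$, being a right adjoint, preserves limits and in particular the terminal object, so there is a canonical equivalence $j(*)\simeq *$ in $\Y$. This lets me define $j_*(X,\,*\to X):=(j(X),\,*\simeq j(*)\to j(X))$. Full faithfulness of $j_*$ then follows from that of $j$ together with the pullback description of $\Map_*$ recalled in the notational conventions. For the left adjoint, I define
$$L_*(Y,\,*\to Y)\;:=\;L(Y)\coprod_{L(*)}*,$$
pointed by the coprojection from $*\in\X$, the pushout being taken along the unique map $L(*)\to *$ in $\X$. The adjunction identity $\Map_{\X_*}(L_*Y,X)\simeq\Map_{\Y_*}(Y,j_*X)$ is then verified by writing both sides as fibers of the appropriate maps between unpointed mapping spaces and invoking $L\dashv j$ together with the equivalence $j(*)\simeq *$.

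For part (2), the key simplification is that left-exactness of $L$ gives $L(*)\simeq *$, so the canonical map $L(Y)\to L_*(Y)$ is an equivalence and $L_*$ is essentially $L$ equipped with the induced basepoint. Left-exactness of $L_*$ then follows from that of $L$, since finite non-empty limits in $\C_*$ are computed as the corresponding limits in $\C$ with the universally induced basepoint, and $L_*$ preserves the terminal object by inspection. For the smash product, I use the pushout presentation
$$X\wedge Y\;\simeq\;\colim\bigl(*\leftarrow X\vee Y\to X\times Y\bigr),$$
and combine the facts that $L_*$ preserves this colimit (as a left adjoint by part (1)) and preserves finite products (being left exact), yielding $L_*(X\wedge Y)\simeq L_*X\wedge L_*Y$.

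The main technical subtlety lies in the construction of $L_*$ in part (1): one cannot simply apply $L$ to the underlying object and declare the image of the basepoint to be a basepoint, since $L$ is not yet assumed to preserve terminal objects. The pushout formula above corrects precisely for this discrepancy, and is what makes the reduction in part (2) clean once $L$ is taken to be left exact.
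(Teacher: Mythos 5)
Your argument is correct, but it takes a different route through part (1) than the paper does, and the difference is worth flagging. You construct the left adjoint on coslices by the general formula $L_*Y = LY\coprod_{L(*)}*$, designed to compensate for a possible failure of $L$ to preserve the terminal object. The paper instead begins by \emph{proving} that $L$ preserves the terminal object: since $j$ is fully faithful, the terminal object of $\Y$ lies in the essential image of $j$ (it is a limit, and the essential image of a fully faithful right adjoint is closed under limits), whence $\Map_\X(X,L*)\simeq\Map_\Y(jX,jL*)\simeq\Map_\Y(jX,*)\simeq *$ and $L*$ is terminal. With that in hand $L_*$ is just $L$ applied to the underlying object and basepoint, and the adjunction is checked on the pullback description of pointed mapping spaces exactly as you do. So the ``main technical subtlety'' you identify at the end is not actually present under the stated hypotheses: full faithfulness of $j$ already forces $L(*)\simeq *$, and your pushout correction is vacuous from the start rather than only after left-exactness is assumed in part (2). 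Your formula is not wrong --- it reduces to the paper's construction and has the virtue of being the formula one would use for an arbitrary adjunction --- but it costs you an extra implicit hypothesis (that $\X$ admits the relevant pushout, which the proposition does not assume) and it obscures the one genuinely non-obvious observation in part (1). Your part (2) coincides with the paper's: left-exactness of $L_*$ from the computation of limits in coslices, and preservation of the smash product from its defining pushout square using that $L_*$ preserves colimits and finite products.
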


\begin{proof}%\begin{enumerate}
%    \item 
(1) Since $j$ is fully faithful we have
    $$
    \Map_\X(X, L*) \simeq \Map_\X (LjX, L*) \simeq \Map_\Y(jX, *) \simeq *
    $$
    for any $X \in \X$, hence $L*$ is a terminal object in $\X$. This shows that $L$ preserves terminal objects, and hence descends to a functor $L_*:\Y_* \longrightarrow \X_*$ between coslice categories. The same holds for $j$ since it is a right adjoint. Now for pointed objects $X\in \X_*$ and $Y\in \Y_*$, the natural isomorphism $$
    \Map_{\Y_*}(j_*X, Y) \simeq \Map_{\X_*} (X, L_*Y)
    $$
    results from the following natural equivalence of cospans of spaces 
    \[
    \begin{tikzcd}
         \Map_\Y(jX, Y) \ar[r] \ar[d, "{\sim}" sloped] & \Map_\Y(*, Y) \ar[d, "{\sim}" sloped] & \{y\} \ar[l] \ar[d, "{\sim}" sloped] \\
         \Map_\X(X, LY)\ar[r] & \Map_\X(*, LY)  & \{x\} \ar[l]  
    \end{tikzcd}
    \]
    where the colimits of the top and bottom spans are respectively $\Map_{\Y_*}(j_*X, Y)$ and $\Map_{\X_*} (X, L_*Y)$. The middle vertical map is the composite $$
    \Map_\Y(*, Y) \longweak \Map_\Y(j*, Y) \longweak \Map_\X(*, LY)
    $$
    in which the left arrow is induced by composition with the equivalence $j* \longweak *$, and the right equivalence results from the adjunction $L \dashv j$. The fact that $j_*$ is fully faithful follows from a similar argument. 
    %\item 
    \\ 
    (2) Left-exactness of $L_*$ is immediate since limits in coslice categories are computed at the level of underlying objects, and $L$ preserves finite limits by assumption. Moreover, since $L_*$ commutes with small colimits and finite limits, the square 
    $$\begin{tikzcd}
    L_*X\vee L_*Y \arrow[dr, phantom, "\scalebox{1}{$\ulcorner$}" , very near end, color=black] \ar[r] \ar[d] & L_*X \times L_*Y \ar[d] \\
    * \ar[r] & L_*(X\wedge Y)
    \end{tikzcd}
    $$
    is a pushout in $\X$, and the results follows.
%    \end{enumerate}
\end{proof}

\subsection{Connectivity structure}
In this paragraph, we record the definition of $n$-connected maps in $\infty$-toposes, following \cite[Section~6.5]{HTT}. Though the definition we use makes sense in an arbitrary $\infty$-category with finite limits, the connectivity estimate on smash productd \Cref{connprop} requires stability properties of $n$-connected maps which we only know to hold in an $\infty$-topos.

\begin{definition}\label{cech}
    Let $\C$ be an $\infty$-category with finite limits and let $f:X\longrightarrow Y$ be a map in $\C$.
    \begin{enumerate}
        \item 
    The \emph{\v Cech nerve} of $f$ is the augmented simplicial object $\check{C} (f): \mathbf{\Delta}_a \longrightarrow \C$ that is the right Kan extension of $f:[1]\longrightarrow \C$ along the inclusion $[1] \hookrightarrow \mathbf{\Delta}_a$.
    The \v Cech nerve of $f$ can be represented by $$
    \begin{tikzcd}
\cdots \ar[r, shift right=3]\ar[r, shift right=1]\ar[r, shift left=1]\ar[r, shift left=3]
&  X\times_Y X \times_Y X 
\ar[r]\ar[r, shift right=2]\ar[r, shift left=2]
& X\times_Y X 
\ar[r, shift right]\ar[r, shift left] & X \ar[r, "f"] & Y
\end{tikzcd}
    $$
    where face maps are projections and degeneracies are formed using the diagonal $\Delta f$.
    \item $f$ is an \textit{effective epimorphism} if $f$ exhibits $Y$ as a colimit of the restriction $$\begin{tikzcd}
    \mathbf{\Delta} \subseteq \mathbf{\Delta}_a \ar[r, "{\check{C} (f)}"] & \C.
    \end{tikzcd}
    $$
\begin{comment}
    \item $f$ is a \textit{monomorphism} if the following square is Cartesian $$
    \begin{tikzcd}
    X \arrow[dr, phantom, "\scalebox{1}{$\lrcorner$}" , very near start, color=black]\ar[r, equals] \ar[d, equals] & X \ar[d, "f"] \\
    X \ar[r, "f"] & Y.
    \end{tikzcd}
    $$   
\end{comment}
    \end{enumerate}
\end{definition}

\begin{comment}
\begin{definition}
    Let $\C$ be an $\infty$-category.
    \\ 
    We define the notions of $n$\textit{-truncated} and $n$\textit{-connected} map $f:X\longrightarrow Y$ in $\C$ by induction on $n\geq -2$ as follows \begin{enumerate}
        \item $f$ is $(-2)$-truncated if it is an equivalence. Every map $f$ is $(-2)$-connected.
        \item $f$ if $n$-truncated (resp. $n$-connected) if the diagonal $\Delta f : X \longrightarrow X\times_Y X$ is $(n-1)$-truncated (resp. $f$ is an effective epimorphism and $\Delta f$ is $(n-1)$-connected).
    \end{enumerate}
    
    An object $X\in \C$ is called $n$-truncated (resp. $n$-connected) if the unique map $X\longrightarrow *$ is $n$-truncated (resp. $n$-connected). 

    If $X$ is an $n$-connected object, we write $\conn(X)\geq n$. We simply say that an object is \textit{connected} when it is 0-connected.
\end{definition}
\end{comment}

\begin{definition}\label{conndef}
    Let $\C$ be an $\infty$-category and $f:X\longrightarrow Y$ be a map in $\C$.
    \\ 
    We define the property of $f$ being an $n$\textit{-connected} map, written $\conn(f)\geq n$, by induction on $n\geq -1$ as follows: \begin{enumerate}
        \item $f$ is $(-1)$-connected if it is an effective epimorphism.
        \item $f$ if $n$-connected if it is $(-1)$-connected and the diagonal $\Delta f : X \longrightarrow X\times_Y X$ is $(n-1)$-connected.
    \end{enumerate}
    
    An object $X\in \C$ is called $n$-connected if the unique map $X\longrightarrow *$ is $n$-connected. If $X$ is an $n$-connected object, we write $\conn(X)\geq n$. We say that an object is \textit{connected} when it is 0-connected.
\end{definition}

\begin{remark}
    If $f:X \longrightarrow Y$ is an $n$-connected map in $\C_*$, then it is $n$-connected in $\C$. The converse is not true in general, as can be noticed when $\C=\Sp$.

    In a presheaf $\infty$-category $\PSh (\C)$, an object $X$ is $n$-connected if and only if the spaces $X(C)$ are $n$-connected for any $C\in \C$. That is, if is a presheaf of $n$-truncated (resp. $n$-connected) spaces. The same holds for $\infty$-categories of sheaves on a site.
\end{remark}

\begin{remark}
    When $\C=\Sp$, an object $X$ is $n$-connected if and only if it is $n$-connected in the usual sense, ie. if its homotopy groups $\pi_i(X)$ vanish for all $i<n$ for any choice of basepoint. There is however a shift in the convention for maps: a map $f$ in $\Sp$ is $n$-connected according to \Cref{conndef} exactly when $\pi_i(f)$ is an isomorphism for $i\leq n$ and an epimorphism for $i=n+1$. These maps are called $(n+1)$-connected in the traditional use of the term in homotopy theory. The terminology we use is in accordance with \cite{ABJF20} and \cite{EHP}, but differs from \cite{HTT} where an $n$-connected map in our sense is called $(n+1)$-connective.
\end{remark}

In the rest of this section, we let $\E$ be an $\infty$-topos and $n\geq -1$ an integer. 

\begin{proposition}\label{connpullback}\cite[Section~6.5.1]{HTT}
    \begin{enumerate}
        \item  The class of $n$-connected maps is stable under base change and cobase change along any morphism.
        \item Suppose $X \overset{f}{\longrightarrow} Y \overset{g}{\longrightarrow} Z$ are composable maps in $\E$ where $f$ is $n$-connected. Then $g$ is $n$-connected if and only if $g\circ f$ is $n$-connected.
        \item Suppose $f:X\to Y$ is a map that admits a section $s$. Then $f$ is $(n+1)$-connected if and only if $s$ is $n$-connected.
    \end{enumerate}  
\end{proposition}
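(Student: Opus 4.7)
The plan is induction on $n$, using the recursive definition of $n$-connectedness. The base case $n = -1$ amounts to familiar statements about effective epimorphisms in $\E$: stability under base change and cobase change, closure under composition, and right cancellation. These all follow from the Giraud-style axiomatics of $\infty$-topoi. Throughout the induction we will lean on the key observation that diagonals commute with base change: given a pullback square with right vertical $f$ and left vertical $f'$, there is a natural equivalence $X' \times_{Y'} X' \simeq (X \times_Y X) \times_Y Y'$, so $\Delta f'$ is identified with the pullback of $\Delta f$ along $Y' \to Y$.

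With this dictionary, part (1) for base change becomes a routine induction: the pullback $f'$ is $(-1)$-connected by the base case, and its diagonal is $(n-1)$-connected by the inductive hypothesis applied to $\Delta f$. For cobase change, one combines stability of effective epis under pushout (a consequence of descent in $\E$) with the cube lemma (universal pushouts, \Cref{descent}) to analyze the diagonal of the pushout, again inducting on $n$.

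For part (2), the main geometric input is the pullback square
$$
\begin{tikzcd}
X \times_Y X \ar[d]\ar[r, "\phi"] & X \times_Z X \ar[d, "f \times_Z f"] \\
Y \ar[r, "\Delta g"] & Y \times_Z Y
\end{tikzcd}
$$
which exhibits the canonical comparison $\phi$ as a base change of $\Delta g$ along a map $f \times_Z f$ that is itself $n$-connected (as an iterated base change of $f$). The map $\Delta(gf)$ factors as $X \xrightarrow{\Delta f} X \times_Y X \xrightarrow{\phi} X \times_Z X$. Since $\Delta f$ is $(n-1)$-connected by the hypothesis on $f$, the inductive form of (2) at level $n-1$ transfers $(n-1)$-connectedness between $\phi$ and $\Delta(gf)$, while the descent statement from (1) transfers $(n-1)$-connectedness between $\phi$ and $\Delta g$ across $f \times_Z f$. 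Combined with the base case for effective epimorphisms, this yields the desired equivalence at level $n$.

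Part (3) is where I expect the main obstacle. Since $f$ has a section, it is automatically an effective epimorphism (the section splits the \v Cech nerve), so $f$ is $(n+1)$-connected if and only if $\Delta f$ is $n$-connected, and the problem reduces to comparing connectivities of $s$ and $\Delta f$. The map $sf : X \to X$ is idempotent (as $fs \simeq \id_Y$), and its splitting is $X \xrightarrow{f} Y \xrightarrow{s} X$; this identifies $Y$ with the equalizer of $sf$ and $\id_X$ and produces a pullback square
$$
\begin{tikzcd}
Y \ar[r, "s"]\ar[d, "s"] & X \ar[d, "\Delta f"] \\
X \ar[r, "{(\id, sf)}"] & X \times_Y X
\end{tikzcd}
$$
whose existence already yields one implication via part (1). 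For the converse, the map $(\id, sf)$ is itself a section of the first projection $p_1 : X \times_Y X \to X$ (which is a base change of $f$, hence connected to the same degree as $f$); an auxiliary induction transferring connectivity through parts (1) and (2) then reduces the remaining case to the inductive hypothesis of (3) at level $n-1$.
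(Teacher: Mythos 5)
First, note that the paper does not prove this proposition at all: it is imported wholesale from \cite[Section~6.5.1]{HTT}, so there is no internal argument to compare yours against. Judged on its own terms, your sketch follows the right general contours for base change in (1) and for the factorization $\Delta(gf)\simeq \phi\circ\Delta f$ in (2), but it has genuine gaps at exactly the points where the $\infty$-topos hypothesis does real work.

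The most serious gap is cobase change in (1). The diagonal of a pushout is not a base change or cobase change of the diagonal of the original map, so the inductive scheme ``effective epis are stable under pushout, then handle $\Delta$ of the pushout by the cube lemma'' has no step to induct on: \Cref{descent} computes fibers of pushouts over a \emph{fixed} base, and says nothing about $\Delta f'$ for $f'$ a cobase change of $f$. The proof that actually works identifies $n$-connected maps as the left class of an orthogonal factorization system (with $n$-truncated maps as the right class); left classes are formally closed under cobase change. This is not a cosmetic substitution --- it is the one place where presentability and the small object argument enter, and the paper itself flags this dependence in the proof of \Cref{connprop}. Two further points. In part (2), the implication ``$gf$ $n$-connected $\Rightarrow$ $g$ $n$-connected'' needs $\Delta g$ to be $(n-1)$-connected given that its base change $\phi$ along the effective epimorphism $f\times_Z f$ is; part (1) only says that base change \emph{preserves} connectivity, whereas here you need that base change along an effective epimorphism \emph{reflects} it. That is a true but separate descent lemma requiring its own proof. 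In part (3), the easy direction is fine once you verify your square is Cartesian (it is: the pullback of $\Delta f$ along $(\id,sf)$ recovers $Y$), but the converse is precisely the hard direction and your ``auxiliary induction'' is not carried out. The natural attempt --- $\Delta f$ is a section of the projection $p_2:X\times_Y X\to X$, and the other section $(sf,\id)$ is a base change of $s$, hence $n$-connected --- requires applying statement (3) to $p_2$ at the \emph{same} level $n$ to conclude $p_2$ is $(n+1)$-connected, which is circular as stated. One either needs a genuinely different induction (e.g.\ via homotopy group objects and long exact sequences, as in \cite{HTT}) or a careful double induction that your sketch does not set up.
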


\begin{proposition}\label{connlemma}
Let $X,Y \in \E_*$ be pointed objects of $\E$. The following statements are equivalent:
    \begin{enumerate}
        \item $X$ is a $k$-connected object of $\E$.
        \item The map $*\longrightarrow X$ is $(k-1)$-connected.
        \item The map $(\id, *):Y \longrightarrow Y \times X$ is $(k-1)$-connected.
    \end{enumerate}
\end{proposition}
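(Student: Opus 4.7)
The plan is to deduce both equivalences from the stability properties of connected maps recorded in \Cref{connpullback}, by exhibiting the relevant maps as sections and as base changes of one another. I would treat $(1) \Leftrightarrow (2)$ and $(1) \Leftrightarrow (3)$ separately; these two together yield all three equivalences.

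For $(1) \Leftrightarrow (2)$, I would start from the simple observation that the terminal map $X \longrightarrow *$ admits the basepoint $* \longrightarrow X$ as a section, since the composite $* \longrightarrow X \longrightarrow *$ is forced to agree with $\id_*$. Applying \Cref{connpullback}(3) with $n = k-1$ then gives at once that $X \longrightarrow *$ is $k$-connected if and only if the section $* \longrightarrow X$ is $(k-1)$-connected, which is exactly the content of $(1) \Leftrightarrow (2)$.

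For $(1) \Leftrightarrow (3)$, I would consider the first projection $\pi : Y \times X \longrightarrow Y$, which sits in a pullback square
\[
\begin{tikzcd}
Y \times X \arrow[dr, phantom, "\scalebox{1}{$\lrcorner$}" , very near start, color=black] \ar[r]\ar[d, "\pi"'] & X \ar[d] \\
Y \ar[r] & *.
\end{tikzcd}
\]
The map $(\id, *) : Y \longrightarrow Y \times X$ appearing in statement $(3)$ is by construction a section of $\pi$, so \Cref{connpullback}(3) gives that $\pi$ is $k$-connected if and only if $(\id, *)$ is $(k-1)$-connected. It thus remains to show that $\pi$ is $k$-connected if and only if $X \longrightarrow *$ is $k$-connected. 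The forward direction follows from \Cref{connpullback}(1) since $\pi$ is a base change of $X \longrightarrow *$. For the converse, I would pull back $\pi$ along the basepoint $* \longrightarrow Y$; since $\{*\} \times_Y (Y \times X) \simeq X$, this recovers $X \longrightarrow *$, and a second application of \Cref{connpullback}(1) finishes the argument.

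No step here is really an obstacle; the only subtlety worth flagging is bookkeeping the degree shift between the $k$-connectivity of a map to the terminal object and the $(k-1)$-connectivity of its section, which is baked into the statement of \Cref{connpullback}(3).
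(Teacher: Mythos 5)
Your proposal is correct and follows essentially the same route as the paper: both equivalences are obtained from \Cref{connpullback}(3) by viewing $*\to X$ as a section of $X\to *$ and $(\id,*)$ as a section of the projection $Y\times X\to Y$, with the two pullback squares relating that projection to $X\to *$ in both directions. The paper compresses your two base-change steps into a single diagram of two Cartesian squares, but the content is identical.
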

\begin{proof}
    Statements (1) and (2) are equivalent by \Cref{connpullback} since $* \longrightarrow X$ is a section of the map $X\to *$. Regarding (3), consider for each $Y\in \E_*$ the diagram $$
    \begin{tikzcd}
        X \arrow[dr, phantom, "\scalebox{1}{$\lrcorner$}" , very near start, color=black]\ar[r]\ar[d] & Y\times X \arrow[dr, phantom, "\scalebox{1}{$\lrcorner$}" , very near start, color=black] \ar[d] \ar[r] & X \ar[d] \\
        * \ar[r] & Y \ar[r] & *  
    \end{tikzcd}
    $$
    in which both squares are pullbacks. \Cref{connpullback} ensure that (1) and (3) are equivalent since $(\id, *)$ is a section of the projection $Y\times X \longrightarrow X$.
\end{proof}

\begin{proposition}\label{connprop}
    Let $\E$ be an $\infty$-topos and $X,Y\in \E_*$ be pointed objects. Assume that $X$ is $k$-connected and $Y$ is $\ell$-connected. Then :\begin{enumerate}
    \item $\Sigma X$ is $(k+1)$-connected
    \item $\Omega X$ is $(k-1)$-connected
    \item $X\wedge Y$ is $(k+\ell +1)$-connected
    \item $X^{\wedge n}$ is $(n(k+1)-1)$-connected for any $n\geq0$.
    \end{enumerate}
\end{proposition}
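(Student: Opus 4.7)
Parts (1) and (2) follow directly from \Cref{connpullback}(1): in (1), $* \to \Sigma X$ is the cobase change of the $k$-connected map $X \to *$, hence $k$-connected, and so $\Sigma X$ is $(k+1)$-connected by \Cref{connlemma}; (2) is the formal dual on the pullback side. Part (4) is then a straightforward induction on $n$ using (3), with base case $X^{\wedge 0} = S^0 \to *$ an effective epimorphism, hence $(-1)$-connected.

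The substantial work is in (3), which I will prove by induction on $\ell$. The base case $\ell = -1$ can be handled directly using the pasted pushouts
\[
\begin{tikzcd}
Y \ar[r]\ar[d] & X \times Y \ar[d] \ar[r] & X\ltimes Y \ar[d] \\
* \ar[r] & X \rtimes Y \ar[r] & X \wedge Y
\end{tikzcd}
\]
to factor $* \to X \wedge Y$ as $* \to X \rtimes Y \to X \wedge Y$, where the first arrow is the cobase change of the $(k-1)$-connected map $(*, \id): Y \to X \times Y$ and the second is the cobase change of the $k$-connected map $X \to *$; by \Cref{connpullback} the composite is $(k-1)$-connected, so $X \wedge Y$ is $k$-connected.

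For the inductive step, the key observation is that the projection $X \times Y \to Y$ descends to a canonical retraction $X \ltimes Y \to Y$ (well-defined because $X \hookrightarrow X \times Y$ maps to the basepoint of $Y$), whose section is $s : Y \xrightarrow{(*, \id)} X \times Y \to X \ltimes Y$. Applying \Cref{descent} to the defining pushout of $X \ltimes Y$ viewed as a diagram over $Y$, one identifies the homotopy fiber of $X \ltimes Y \to Y$ over the basepoint with $\Omega Y \sqcup_{X \times \Omega Y} X = X \star \Omega Y \simeq \Sigma(X \wedge \Omega Y)$ via \Cref{join}. The inductive hypothesis applied to $X$ and $\Omega Y$ (which is $(\ell-1)$-connected by (2)) gives that $X \wedge \Omega Y$ is $(k+\ell)$-connected, and (1) then shows this fiber is $(k+\ell+1)$-connected. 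Since $Y$ is connected, this fiber estimate lifts to the map itself, so $X \ltimes Y \to Y$ is $(k+\ell+1)$-connected; by \Cref{connpullback}(3) its section $s$ is $(k+\ell)$-connected. The pasting law identifies $X \wedge Y$ as the cofiber of $s$, and a last application of \Cref{connpullback}(1) yields that $X \wedge Y$ is $(k+\ell+1)$-connected.

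The main obstacle is the passage from the connectivity of the fiber $\Sigma(X \wedge \Omega Y)$ to that of the projection $X \ltimes Y \to Y$. This step genuinely requires the $\infty$-topos hypothesis, as foreshadowed in the discussion preceding the proposition, since it relies on $n$-connected maps being detected by pullback along effective epimorphisms. Once that step is granted, the remainder of the argument assembles mechanically from \Cref{connpullback}, \Cref{descent}, \Cref{join}, and the inductive hypothesis.
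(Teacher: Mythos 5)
Your argument is correct in substance, but it takes a genuinely different route from the paper, whose proof of (3) and (4) is essentially a pointer to \cite[Proposition~4.12]{EHP}: there the estimate comes from the pushout-product (join) connectivity of the maps $*\to X$ and $*\to Y$, using that $n$-connected maps form the left class of an orthogonal factorization system, so that $X\vee Y\to X\times Y$ is $(k+\ell)$-connected and its cofiber $X\wedge Y$ is therefore $(k+\ell+1)$-connected. You instead run an induction on $\ell$: you fiber $X\ltimes Y$ over $Y$, identify the fiber with $X\star \Omega Y\simeq \Sigma(X\wedge \Omega Y)$ via \Cref{descent} and \Cref{join}, and transfer the connectivity of the fiber to the section $s:Y\to X\ltimes Y$ and then to its cofiber, which the pasting law identifies with $X\wedge Y$. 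What your approach buys is a proof assembled almost entirely from lemmas already stated in this paper, at the cost of being asymmetric and inductive; what the factorization-system approach buys is a one-step, symmetric estimate at the level of maps. The one step you correctly isolate as external is the passage from the $(k+\ell+1)$-connectedness of $\fib(X\ltimes Y\to Y)$ to that of the map itself: this is the statement that $n$-connectedness of a morphism is detected after base change along an effective epimorphism (here $*\to Y$, an effective epimorphism because $Y$ is $0$-connected). This is true in any $\infty$-topos (see \cite[Proposition~6.5.1.16]{HTT}), but it is not among the stability properties recorded in \Cref{connpullback}, so it must be added as an explicit citation rather than merely attributed to ``the $\infty$-topos hypothesis.'' Everything else --- parts (1), (2), (4), the base case $\ell=-1$ via the pasted pushout squares, and the identification $X\wedge Y\simeq \cof(s)$ --- checks out against the paper's definitions and lemmas.
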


\begin{proof} The proof of (1) and (2) follows from the fact that $n$-connected morphisms are stable under (co)base change, and that a pointed object $X\in \E_*$ is $k$-connected if and only if $*\longrightarrow X$ is $(k+1)$-connected in $\E$. Notably, statements (3) and (4) use the fact that since $\E$ is an $\infty$-topos, $n$-connected maps form the left class of a factorization system. A complete proof is in \cite[Proposition~4.12]{EHP}.
\end{proof}

\subsection{Hypercompleteness}

In this section $\E$ denotes an $\infty$-topos.

\begin{definition}
    We say that a map $f:X\longrightarrow Y$ in $\E$ is \emph{$\infty$-connected} if it is $n$-connected for every $n\geq -2$.
\end{definition}

\begin{proposition}\label{inftyconnected}
    Suppose given a tower of maps in $\E$ of the form $$
    \begin{tikzcd}
        X_1 \ar[d]\ar[r] & X_2 \ar[d]\ar[r] & X_3 \ar[d]\ar[r] & \cdots  \\
        Y_1 \ar[r] & Y_2 \ar[r] & Y_3 \ar[r] & \cdots 
    \end{tikzcd}$$
    and a sequence of integers $(k_n)$ such that $k_n \to \infty$ and the maps $X_n \to X_{n+1}$ and $X_n \to Y_n$ are $k_n$-connected for each $n$. Then the induced map on colimits $X_\infty \longrightarrow Y_\infty$ is $\infty$-connected.  
\end{proposition}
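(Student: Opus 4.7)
The plan is to show, for an arbitrary fixed $m\geq -1$, that the induced map $f:X_\infty \longrightarrow Y_\infty$ on colimits is $m$-connected; the conclusion will then follow since $m$ is arbitrary. Because $k_n \to \infty$, there is an index $N$ with $k_n \geq m$ for all $n\geq N$, and since $p$-connectedness implies $m$-connectedness whenever $p\geq m$ (a routine induction on the definition), the vertical maps $X_i \to Y_i$ and the horizontal maps $X_i \to X_{i+1}$ are then $m$-connected for every $i\geq N$.

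The first step is to assemble these into statements for the long composites. By \Cref{connpullback}(2), composites of $m$-connected maps are again $m$-connected, so for each $n\geq N$ the map $X_N \longrightarrow X_n$ (a finite composite of the $X_i \to X_{i+1}$) and the map $X_N \longrightarrow Y_n$ (obtained by composing further with $X_n \to Y_n$) are both $m$-connected. Writing $X_\infty = \colim_{n\geq N} X_n$ and $Y_\infty = \colim_{n\geq N} Y_n$, the maps $X_N \to X_\infty$ and $X_N \to Y_\infty$ then arise as filtered colimits in the arrow category $\E^{\to}$ of the $m$-connected families $(X_N \to X_n)_{n\geq N}$ and $(X_N \to Y_n)_{n\geq N}$ respectively.

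The crucial second step is to invoke the fact that in an $\infty$-topos the class of $m$-connected morphisms is stable under filtered colimits in $\E^{\to}$; this may be seen by characterising $m$-connected maps as those inverted by the truncation $\tau_{\leq m}$ and using cocontinuity of $\tau_{\leq m}$, or directly from the fact that $m$-connected maps form the left class of a factorisation system. It follows that both $X_N \to X_\infty$ and $X_N \to Y_\infty$ are $m$-connected. A final application of \Cref{connpullback}(2) to the commutative triangle $X_N \to X_\infty \xrightarrow{f} Y_\infty$ (whose composite equals $X_N \to Y_N \to Y_\infty$), in which the first arrow and the composite are now $m$-connected, shows that $f$ itself is $m$-connected. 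The main non-formal ingredient is the filtered colimit stability in the second step; this is precisely where the $\infty$-topos hypothesis on $\E$ is needed rather than mere presentability.
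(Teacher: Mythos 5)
Your argument is correct and follows essentially the same route as the paper's proof: both reduce to showing that the maps from a fixed finite stage into the two colimits are suitably connected (the paper phrases this as a ``composite of $k_n$-connected maps'', implicitly using the same closure of the left class of the ($n$-connected, $n$-truncated) factorization system under colimits in the arrow category that you invoke explicitly) and then cancel via \Cref{connpullback}(2). One small caveat: of your two justifications for the colimit-stability step, the factorization-system one is the right one, whereas ``inverted by $\tau_{\leq m}$'' is strictly weaker than ``$m$-connected'' (e.g.\ $*\to K(\mathbb{Z},m+2)$ in spaces is inverted by $\tau_{\leq m+1}$ but is only $m$-connected), so that characterization alone would not close the argument.
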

\begin{proof}
    Taking an appropriate subdiagram if necessary, we can assume that $(k_n)$ is a non-decreasing sequence. Under this assumption, the map $Y_n \to Y_{n+1}$ is also $k_n$-connected by \Cref{connpullback}. For each $n$, consider the square $$
    \begin{tikzcd}
        X_n \ar[r] \ar[d] & X_{\infty} \ar[d] \\
        Y_n \ar[r] & Y_{\infty}.
    \end{tikzcd}
    $$
    The maps $Y_n \to Y_\infty$ and $X_n \to X_\infty$ are $k_n$-connected as composites of $k_n$-connected maps. Hence the composite $X_n \to Y_n \to Y_\infty$ is $k_n$-connected, and \Cref{connpullback} ensures that $X_\infty \to Y_\infty$ is $k_n$-connected. This concludes the proof.
\end{proof}

\begin{comment}
    \begin{remark}
    There is corresponding notion of $\infty$-truncated map. However, the definition is not the one we expect. Indeed, a map $f$ that is $n$-truncated for all $n\geq -2$ is an equivalence. We rather define $f$ to be $\infty$-truncated when it is right orthogonal to all $\infty$-connected maps.
\end{remark}
\end{comment}

\begin{definition}
    An $\infty$-topos $\E$ is called \textit{hypercomplete} if any $\infty$-connected map is an equivalence.
\end{definition}

\begin{example}
We list some basic examples of hypercomplete $\infty$-toposes:
\begin{enumerate}
    \item $\Sp$ is a hypercomplete $\infty$-topos. This is the content of \textit{Whitehead's theorem}. See for example \cite[Chapter~V, Theorem~3.5]{bookwhitehead} for a textbook account.
    \item Any presheaf $\infty$-topos $\PSh (\C)$ is also hypercomplete. However, sheaf $\infty$-toposes are generally \textit{not} hypercomplete (even on a 1-categorical site). See for example \cite[Section~11.3]{rezktopos} where hypercompleteness is called $t$-completeness.
    \item Any $\infty$-topos that is (equivalent to) an $n$-category for some $n<\infty$ is hypercomplete. This follows from \cite[Lemma~6.5.2.9]{HTT}.
\end{enumerate}   
\end{example}

\section{Free Lie algebras and Hall words}
\label{hallsection}

In this section we introduce \textit{Hall words}, and include a structure result which allows to compute them. These results are classical and we include them only for reference. What we call a set of Hall words on symbols $x_1, \cdots, x_n$ would traditionally be referred to as a \textit{Hall basis} on these symbols. This is (one choice of) a basis for the \textit{free Lie ring} (a Lie algebra over $\mathbb Z$) generated by the symbols~$x_i$.

For the rest of this section, we fix a totally-ordered finite set of symbols $L=\{x_1< \cdots< x_n\}$ and denote $\Lie(L)=\Lie (x_1, \cdots , x_n)$ the free Lie ring generated by the $x_i$. As proved for example in \cite[Proposition~8.5.1]{neisendorfer_2010}, the underlying abelian group of $\Lie (L)$ is free. The purpose of a Hall basis is to provide an explicit basis for this free $\mathbb Z$-module.

The construction of a Hall basis is inductive. At each step $n$ in the construction we build a countable list $L_n \subseteq \Lie (L)$ with a total order on it. As usual we write $\ad(x)(y)=[x,y]$ for the adjoint representation of a Lie algebra. Hence for $i\geq 0$, $\ad(x)^i(y)$ is the iterated commutator $[x, [x, \cdots, [x,y]\cdots]]$ where the symbol $x$ appears $i$ times. Given an iterated commutator $w$ involving elements of $L$, its wordlength $\ell(w)$ is the total number of elements of $L$ appearing in the expression of $w$, counted with multiplicity. For instance, $\ell([x,[x,y]])=3$.

\begin{definition}\label{hall_dfn}
    Let $L$ be a finite totally-ordered set. Define inductively a sequence $L_n\subseteq \Lie (L)$ of well-ordered sets as follows : 
    \begin{enumerate}
    \item Set $L_0=L$ with its given total order.
    \item If $L_{r-1}$ has been constructed and given a well order, define $x_{r}=\mathrm{min}\: L_{r-1}$ and set $$
    L_{r}=\{\ad(x_{r})^i(x) \: | \: i\geq 0, x\in L_{r-1}, x\ne x_{r}\}.
    $$
    Then choose \textit{any} total order $<$ on $L_{r}$ that is compatible with word length. That is, we require that $x<y$ whenever $\ell(x)<\ell(y)$ for elements $x,y \in L_{r}$. This implies that $<$ is a well order.
    \end{enumerate}
    The resulting sequence $B=(x_r)_{r\geq 1}$ is called a \textit{Hall basis} on the set $L$.
\end{definition}

\begin{remark}
Note that at each step in the construction of a Hall basis on $L$, we are free to choose any total order compatible with word length. Different total orders will result in different Hall bases. Hence a Hall basis is not uniquely determined by $L$. Note that, taking $i=0$ in the definition of $L_{r}$, we see that $L_{r-1}\setminus \{x_{r}\} \subseteq L_{r}$. A convenient convention is to take the order of $L_{r}$ to extend the one already defined on $L_r$. With this convention, denoting $L=\{x_1, \cdots, x_n\}$ we see that the first $n$ elements of a Hall basis are the generators $x_1 < \cdots < x_n$. 
\end{remark}

We include the following two results for the interest of the reader. First, as advertised above a Hall basis on $L$ is a $\mathbb Z$-basis for $\Lie(L)$. This is proved in \cite{Hall1950ABF}, where elements of a Hall basis are called standard monomials. 

\begin{theorem}{\rm \cite[Theorem 3.1]{Hall1950ABF}}
    Any Hall basis on $L$ provides a basis for the free abelian group underlying $\Lie(L)$. Denoting $B$ such a Hall basis, there is an isomorphism of abelian groups $$
    \Lie (L) \cong \mathbb Z [B].
    $$
\end{theorem}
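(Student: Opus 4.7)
The plan is to establish the spanning and linear independence properties of a Hall basis $B$ separately. For spanning, I would argue by induction on word length that every iterated commutator of the generators $x_1, \ldots, x_n$ lies in the $\mathbb{Z}$-span of $B$. The generators themselves are Hall by construction. For the inductive step, given a bracket $[u,v]$ in which $u$ and $v$ are already expressed as integer combinations of Hall words of strictly shorter length, bilinearity of the bracket reduces the problem to the case where $u, v \in B$. If $[u,v]$ is not itself Hall, one repeatedly applies antisymmetry $[u,v] = -[v,u]$ and the Jacobi identity $[a,[b,c]] = [[a,b],c] + [b,[a,c]]$ to rewrite $[u,v]$ as an integer combination of brackets of the same total length but strictly closer to Hall form in a well-chosen lexicographic ordering on pairs of standard monomials; a secondary induction on this ordering concludes.

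For linear independence, the cleanest route goes through the Poincar\'e--Birkhoff--Witt theorem. The universal enveloping algebra of the free Lie ring $\Lie(L)$ is canonically identified with the tensor algebra $T(L)$, and PBW over $\mathbb{Z}$ asserts that for any $\mathbb{Z}$-basis $(b_i)$ of $\Lie(L)$ equipped with a total order, the ordered monomials $b_{i_1} b_{i_2} \cdots b_{i_k}$ with $i_1 \leq \cdots \leq i_k$ form a $\mathbb{Z}$-basis of $T(L)$. Applied to $B$ with its prescribed total order, and combined with the spanning result together with a grading-by-grading comparison of ranks inside $T(L)$, this forces the elements of $B$ to be linearly independent in $\Lie(L) \subseteq T(L)$.

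The main obstacle is the linear independence step: PBW over $\mathbb{Z}$ (rather than over a field) is itself a nontrivial result and would have to be invoked as a black box. An alternative path, closer to Hall's original proof, replaces PBW by a direct combinatorial argument: one shows that the rewriting system generated by antisymmetry and Jacobi is confluent and terminates in a unique Hall normal form, which simultaneously yields both spanning and independence. This route is more elementary but demands a careful case analysis tracking how successive rewrites decrease a suitable complexity measure on bracketed expressions.
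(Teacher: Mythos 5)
The paper does not prove this statement at all: it is quoted verbatim from Hall's 1950 paper (where Hall words are called standard monomials) and is included ``for the interest of the reader''; nothing in the rest of the article depends on having a proof. So there is no argument in the paper to compare yours against, and your proposal has to be judged on its own.

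As a proof it has a genuine gap, and you have in fact flagged part of it yourself. The spanning half is the standard plan, but the entire content lies in the ``well-chosen lexicographic ordering'' that makes the antisymmetry/Jacobi rewriting terminate: naive orderings do not work, and the rank condition $r(x_j)\leq i<j$ in the paper's construction of Hall sets is designed precisely to make this secondary induction go through. Without specifying the ordering and verifying the decrease, this half is a roadmap rather than a proof. More seriously, the independence step is circular as written: PBW in the form you state takes a $\mathbb{Z}$-basis of $\Lie(L)$ as \emph{input} and produces a basis of $T(L)$ of ordered monomials, so you cannot apply it ``to $B$'' before knowing $B$ is a basis --- which is the conclusion you want. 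The non-circular version of this argument applies PBW to an arbitrary homogeneous basis of $\Lie(L)$ (which exists since $\Lie(L)$ is free over $\mathbb{Z}$, as the paper notes) to derive the Witt formula for $\operatorname{rank}\Lie(L)_d$, separately counts the Hall words of each length and checks that the count equals the same Witt number (itself a nontrivial combinatorial fact about Hall sets that you do not address), and then concludes from ``spanning set of cardinality equal to the rank of a free $\mathbb{Z}$-module is a basis.'' Your phrase ``grading-by-grading comparison of ranks'' gestures at this, but the decisive ingredient --- the enumeration of Hall words --- is missing. The alternative confluent-rewriting route you mention would indeed give both halves at once, but it is only named, not carried out.
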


%\textit{Proof:} This is proved in \cite{Hall1950ABF} Theorem 3.1, where elements of a Hall basis are called standard monomials.

The following gives an efficient algorithm for exhibiting words of length $\leq \ell$ in a Hall basis. 

\begin{proposition}
    Define an increasing sequence of finite totally-ordered sets $(B_\ell)_{\ell\geq 1}$ of elements of $\Lie (L)=\Lie(x_1, \cdots, x_n)$, together with a \textit{rank function} $r:B_\ell \longrightarrow \mathbb N$ inductively as follows : \begin{enumerate}
        \item Set $B_1=L=\{x_1< \cdots < x_n \}$ and $r(x_i)=0$ for $i=1, \cdots , n$.
        \item Suppose $B_{\ell-1}$ has been constructed and given a total order together with a rank function $r:B_{\ell-1} \to \mathbb N$. 
        \item[] Define $$
        B_{ \ell}=B_{\ell-1} \cup \big\{ \: [x_i,x_j] \: |\: r(x_j)\leq i < j \: , \: \ell(x_i)+ \ell(x_j)=\ell \: \big\}
        $$
        and chose any total order on $B_\ell$ extending the one on $B_{\ell-1}$.
        \item[] Extend the rank function to $B_{\ell}$ by setting $r([x_i, x_j])=i$.
    \end{enumerate}
    Then $B= \bigcup_{\ell \geq 1} B_\ell$ is a Hall basis on $L$.
\end{proposition}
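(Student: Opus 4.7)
The plan is to prove this by induction on $\ell$, showing that at each stage $B_\ell$ (with its total order and rank function $r$) coincides with the initial segment of some Hall basis in the sense of \Cref{hall_dfn} (using the extension convention from the subsequent remark) consisting of all elements of word length $\leq \ell$. The translation between the two constructions is that, for a non-generator element $y$, the rank $r(y)$ equals the step $s \geq 1$ at which $y$ is produced in the Definition's construction, i.e.\ the unique $s$ with $y \in L_s \setminus L_{s-1}$; generators are assigned rank $0$.

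The base case $\ell = 1$ is immediate since $B_1 = L = L_0$. For the inductive step, the key observation is that an element $x_j$ in the Hall basis lies in $L_{i-1}$ if and only if $r(x_j) < i$ (with $j \geq i$): indeed $x_j$ is created at step $r(x_j)$ and removed at step $j$ (since $x_j = \min L_{j-1}$), so $x_j \in L_s$ exactly for $r(x_j) \leq s \leq j-1$. For the direction ``every length-$\ell$ Hall basis element lies in $B_\ell \setminus B_{\ell-1}$'', let $y$ be created at some step $s$ as $y = \ad(x_s)^k(z) = [x_s, y']$ with $y' \in L_s$ of length strictly less than $\ell$; by induction $y' = x_j$ for some $j > s$, and a case analysis on whether $y'$ is a generator, a bracket created before step $s$, or the bracket $\ad(x_s)^{k-1}(z)$ created at step $s$ (when $k \geq 2$) yields $r(x_j) \leq s$ in every case. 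Hence $y \in B_\ell \setminus B_{\ell-1}$ with rank $s$. Conversely, given $[x_i, x_j] \in B_\ell \setminus B_{\ell-1}$ with $r(x_j) \leq i < j$ and $\ell(x_i)+\ell(x_j)=\ell$: if $r(x_j) < i$ then $x_j \in L_{i-1} \setminus \{x_i\}$, so $[x_i, x_j] = \ad(x_i)(x_j) \in L_i$; if $r(x_j) = i$ then $x_j = \ad(x_i)^k(z)$ was itself created at step $i$, and $[x_i, x_j] = \ad(x_i)^{k+1}(z) \in L_i$. In both subcases $[x_i, x_j]$ is a Hall basis element produced at step $i$, matching the proposition's assignment $r([x_i, x_j]) = i$.

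The main obstacle is the bookkeeping in the forward direction, where one must carefully distinguish the three subcases for $y'$; the equality case $r(x_j) = i$, arising from iterated brackets $\ad(x_s)^k$ with $k \geq 2$, is precisely what forces the condition to read $r(x_j) \leq i$ rather than the superficially more natural $r(x_j) < i$. One also verifies that any total order on $B_\ell$ extending the one on $B_{\ell-1}$ is automatically compatible with word length (since $B_\ell \setminus B_{\ell-1}$ consists of length-$\ell$ elements while $B_{\ell-1}$ contains only shorter ones), and hence can be realized as a valid choice of orders on the successive $L_s$ of the Definition; thus the two constructions produce the same totally ordered sequence.
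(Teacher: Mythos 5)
Your proof is correct, and it rests on the same dictionary as the paper's: an element $x_j$ of the basis belongs to $L_s$ exactly when $r(x_j)\leq s<j$, so the rank function records the step at which a word is created and the index records the step at which it is consumed. The difference is one of self-containedness. The paper's proof sets up this dictionary (by showing $r(x_i)<i$ and defining $L_r=\{x_i\in B \mid r(x_i)\leq r<i\}$) and then outsources the essential combinatorial identity
$$
\big\{\ad(x_r)^i(x_j) \mid i\geq 0,\ r(x_j)<r<j\big\}=\big\{\ad(x_r)^i(x) \mid i\geq 0,\ x_r\neq x\in L_{r-1}\big\}
$$
to a citation of Whitehead. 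Your induction on word length, with the three-way case analysis on $y'$ and the two subcases $r(x_j)<i$ versus $r(x_j)=i$ in the converse direction, is precisely a direct proof of that cited identity; in particular you correctly isolate why the condition must read $r(x_j)\leq i$ rather than $r(x_j)<i$ (the iterated brackets $\ad(x_s)^k$ with $k\geq 2$ produce the equality case), which the paper leaves implicit. The only point to tighten is your final remark on orders: a total order on $B_\ell$ merely \emph{extending} that of $B_{\ell-1}$ need not be length-compatible unless the new length-$\ell$ words are placed after all of $B_{\ell-1}$; this is clearly the intended reading of the construction (and the paper's own argument tacitly assumes it when it writes $B=\{x_1,x_2,\dots\}$ with $r(x_i)<i$), but it is worth stating explicitly.
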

\begin{proof}
    The set $B$ is countable and inherits a total order since the order on each $B_\ell$ is compatible with the inclusion $B_\ell \subseteq B_{\ell+1}$. Write $B=\big\{ x_1, x_2, \cdots \big\}$ where $x_i < x_j$ for $i<j$. An easy induction on the length of $x_i$ shows that $r(x_i)<i$ for every $i$. Now set 
    $$ 
    L_r=\big\{x\in B \: | \: r(x)\leq r < i \big\}
    $$ 
    for each $r\geq 0$. It is then clear that $x_{r+1}=\min (L_r)$ and we recover \Cref{hall_dfn} above by \cite[Remark~6.1]{bookwhitehead} which shows that 
    $$\begin{aligned}
    L_r &= \big\{ \ad(x_r)^i(x_j) \: | \: i\geq 0 , r(x_j) < r < j  \big\} \\
    &= \big\{ \ad(x_r)^i(x) \: | \: i\geq 0 , x_r \ne x \in L_{r-1}  \big\}
    \end{aligned}
    $$
    since $r(x_j) < j$ for every $j\geq 1$.
\end{proof}

\begin{example}
    To illustrate the description given above, let us list all words of length $\leq 5$ in a Hall basis on two generators denoted $x,y$.$$
    \begin{tabular}{|l|c|r|}
    \hline
        $\ell=1$ & $w_1=x$ & $r=0$ \\
                 & $w_2=y$ & $r=0$ \\
    \hline
        $\ell=2$ & $w_3=[x,y]$ & $r=1$ \\
    \hline
        $\ell=3$ & $w_4=[x,[x,y]]$ & $r=1$ \\
                 & $w_5=[y,[x,y]]$ & $r=2$ \\
    \hline
        $\ell=4$ & $w_6=[x,[x, [x,y]]]$ & $r=1$ \\
                 & $w_7=[y,[x, [x,y]]]$ & $r=2$ \\
                 & $w_8=[y,[y, [x,y]]]$ & $r=2$ \\
    \hline
        $\ell=5$ & $w_9=[x,[x, [x,[x,y]]]]$ & $r=1$ \\
                 & $w_{10}=[y,[x, [x,[x,y]]]]$ & $r=2$ \\
                 & $w_{11}=[y,[y, [x,[x,y]]]]$ & $r=2$ \\
                 & $w_{12}=[y,[y, [y,[x,y]]]]$ & $r=2$ \\
                 & $w_{13}=[[x,y],[x, [x,y]]]$ & $r=3$ \\
                 & $w_{14}=[[x,y],[y, [x,y]]]$ & $r=3$ \\
    \hline
    \end{tabular}$$
\end{example}

\begin{notation}\label{words}
    We introduce the following useful notation: given a word $w$ in the free Lie ring $\Lie(x_1, \cdots, x_n)$ and given pointed objects $X_1, \cdots, X_n \in \C_*$ in an $\infty$-category $\C$ with finite products, we denote $w(X_1, \cdots, X_n)$ the object of $\C_*$ obtained by smashing together the $X_i$ in the order indicated by $w$. 
    
    For example, $[x,y](X,Y)=X\wedge Y$ and if $w=[[x_1, [x_3, x_4]], [x_1, [x_1, [x_2, x_3]]$, then 
    $$\begin{aligned}
    w(X,Y,Z,T) &=(X\wedge (Z \wedge T)) \wedge (X \wedge (X\wedge (Y\wedge Z))) \\
    &\simeq X^{\wedge 3}\wedge Y \wedge Z^{\wedge 2} \wedge T.
    \end{aligned}$$
    Since the smash product is commutative and associative (up to natural equivalence), the order in which we smash is of course irrelevant, but this notation helps keep track of the different factors that appear in Theorem \ref{HM}. 
\end{notation}

\section{Hilton-Milnor's theorem}
Before turning to the proof of Hilton-Milnor's theorem, we establish several splitting results in the maximal generality of $\infty$-categories with suitable limits and colimits. Though we state our main theorem \Cref{HM} in an $\infty$-topos, some of the following splittings hold true in a greater generality and may be of independant interest.

\subsection{Fundamental splitting results}

The following analogue of James' splitting is proved in \cite{EHP}. Though the comparison map exists in any $\infty$-category with universal pushouts, the proof that it is an equivalence requires a presentability assumption as well as some connectivity estimates known to hold only in an $\infty$-topos.

\begin{proposition}\label{james}\cite[Proposition~4.18]{EHP}
    Let $\CMcal{E}$ be an $\infty$-topos, and $X\in \E_*$ a pointed connected object. There is a natural equivalence $$
    \Sigma \Omega \Sigma X \simeq \bigvee_{i=1}^\infty \Sigma X^{\wedge i}.
    $$
\end{proposition}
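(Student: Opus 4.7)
The plan is to construct the James filtration of $\Omega \Sigma X$ in the $\infty$-topos $\E$ and show that this filtration splits after suspension. First, I would build a tower $\ast = J_0 X \to J_1 X = X \to J_2 X \to \cdots$ of pointed objects in $\E_*$ by iterated pushouts, arranged so that each stage fits in a cofiber sequence
$$
J_{n-1} X \longrightarrow J_n X \longrightarrow X^{\wedge n}.
$$
Concretely, $J_n X$ is defined recursively as a pushout that models the classical identification $J_n X \simeq X^n/\sim$, where a face of $X^n$ is collapsed whenever one of its coordinates equals the basepoint. The identification of the cofiber as $X^{\wedge n}$ follows by iterated applications of the pasting law \Cref{pasting} together with the definition of the smash product.

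Next, I would show that $J X := \colim_n J_n X \simeq \Omega \Sigma X$ when $X$ is connected. A natural comparison map $JX \to \Omega \Sigma X$ arises from the unit $X \to \Omega \Sigma X$ of the $\Sigma \dashv \Omega$ adjunction, together with the $\EE_1$-algebra structure on $\Omega \Sigma X$ which provides an iterated multiplication $X^n \to \Omega \Sigma X$ compatible with the basepoint collapses defining the filtration. To show that the comparison is an equivalence, one would combine the connectivity bounds of \Cref{connprop} (in particular $\conn(X^{\wedge n}) \geq n(k+1) - 1$ when $X$ is $k$-connected) with a higher Blakers--Massey-type argument to prove that $J_n X \to \Omega \Sigma X$ becomes increasingly connected as $n \to \infty$; passing to the colimit and invoking \Cref{inftyconnected} produces an $\infty$-connected map, which in the presentable $\infty$-topos setting is upgraded to an equivalence via the descent refinement of Blakers--Massey \cite{EHP}.

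For the splitting, I would prove by induction on $n$ that $\Sigma J_n X \simeq \bigvee_{i=1}^n \Sigma X^{\wedge i}$. Suspending the cofiber sequence from the first step yields
$$
\Sigma J_{n-1} X \longrightarrow \Sigma J_n X \longrightarrow \Sigma X^{\wedge n},
$$
and I would construct a section $\Sigma X^{\wedge n} \to \Sigma J_n X$ (or equivalently a retraction of the first map) using the $\EE_1$-structure on $JX \simeq \Omega \Sigma X$. A conceptually clean route is to loop once so as to obtain a fiber sequence of group objects, and then apply \Cref{split} to produce the splitting. Combining the inductive splittings with the fact that $\Sigma$ commutes with wedges and filtered colimits (as a left adjoint, or by explicit manipulation of the pushouts defining $\Sigma$) then yields
$$
\Sigma \Omega \Sigma X \simeq \Sigma J X \simeq \colim_n \bigvee_{i=1}^n \Sigma X^{\wedge i} \simeq \bigvee_{i=1}^\infty \Sigma X^{\wedge i}.
$$

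The hardest step, and the one that genuinely uses the $\infty$-topos hypothesis, is the identification $JX \simeq \Omega \Sigma X$. Controlling the connectivity of the comparison map requires both universal pushouts and the sharpened Blakers--Massey theorem available in an $\infty$-topos; in a general presentable $\infty$-category this identification can fail. The splitting in the third step is combinatorially intricate but conceptually routine once the group-object splitting \Cref{split} is available, and it does not require further homotopical input beyond what the earlier sections of the paper already develop.
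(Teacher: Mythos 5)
The paper does not prove this statement: it is imported directly from \cite[Proposition~4.18]{EHP}, and the proof there does not go through the James construction at all. Roughly, \cite{EHP} uses descent for the pushout $\Sigma X \simeq \ast \sqcup_X \ast$ to obtain a one-step ``crude'' splitting $\Sigma\Omega\Sigma X \simeq \Sigma X \vee \Sigma(X\wedge\Omega\Sigma X)$, then iterates it; after $n$ substitutions the error term is $\Sigma(X^{\wedge n}\wedge\Omega\Sigma X)$, whose connectivity tends to infinity by \Cref{connprop} when $X$ is connected, so the comparison map from the infinite wedge is $\infty$-connected and one concludes by the same hypercompleteness/presheaf reduction used at the end of \Cref{HM}. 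Your route through the filtration $\ast \to X \to J_2X \to \cdots$ is the classical one and is genuinely different, but as written it has two real gaps, both located exactly where the actual content lies.

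First, the identification $JX\simeq\Omega\Sigma X$ is not obtainable from \Cref{connprop} together with ``a Blakers--Massey-type argument.'' To write down the comparison map coherently you must exhibit $JX$ as the free $\EE_1$-monoid on $X$ for the Cartesian structure (an infinite tower of coherences, not just iterated multiplications $X^n\to\Omega\Sigma X$), and to show that $J_nX\to\Omega\Sigma X$ is highly connected you need either a group-completion statement for connected monoid objects in an $\infty$-topos or a homological computation in the style of Bott--Samelson; neither is developed in this paper, and neither follows formally from Blakers--Massey. This step is essentially James' theorem itself and is the reason \cite{EHP} circumvents the James construction. Second, your splitting step misapplies \Cref{split}: that result splits \emph{fiber} sequences of \emph{group objects} admitting a section, whereas $\Sigma J_{n-1}X\to\Sigma J_nX\to\Sigma X^{\wedge n}$ is a \emph{cofiber} sequence, and looping a cofiber sequence does not yield a fiber sequence. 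Unstably, a cofiber sequence equipped with a section or a retraction need not decompose as a wedge; the classical argument produces explicit combinatorial retractions (James--Hopf maps) and verifies the splitting homologically. A correct elementary route to the splitting in this setting is the one via \Cref{descent} and \Cref{cof}, as in \cite{EHP}, rather than via \Cref{split}.
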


\begin{proposition}\label{half1}
Let $\C$ be an $\infty$-category with universal pushouts and let $X,Y \in \C_*$ be pointed objects in $\C$. There is a natural equivalence $$
\Omega (X \vee Y) \simeq \Omega Y \times \Omega(X \rtimes \Omega Y).
$$
\end{proposition}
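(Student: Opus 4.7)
The plan is to produce a split fiber sequence $X \rtimes \Omega Y \to X \vee Y \to Y$ in $\C_*$ and then loop and apply the splitting \Cref{split}. The first step is to consider the collapse map $p: X \vee Y \to Y$ obtained from the universal property of the wedge by the identity $Y \to Y$ and the null map $X \to Y$; this admits a section given by the wedge inclusion $\iota: Y \hookrightarrow X \vee Y$.

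The main content of the argument is to identify $\fib(p)$ as $X \rtimes \Omega Y$. For this I would apply the descent \Cref{descent} to the defining pushout square
\[
\begin{tikzcd}
* \ar[r]\ar[d] & X \ar[d] \\
Y \ar[r] & X \vee Y
\end{tikzcd}
\]
equipped with its natural maps to $Y$ (the basepoint, the basepoint, the identity), which are compatible because the two composites $* \to Y$ agree. Descent yields
\[
\fib(p) \simeq \fib(X \to Y) \sqcup_{\fib(* \to Y)} \fib(Y \to Y).
\]
The right-hand pushout simplifies: $\fib(Y \to Y) \simeq *$, $\fib(* \to Y) \simeq \Omega Y$, and since the map $X \to Y$ is null and factors as $X \to * \to Y$, the fiber $\fib(X \to Y)$ is obtained by pasting two pullbacks and is therefore equivalent to $X \times \Omega Y$. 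Tracing through the descent data, the gluing map $\Omega Y \to X \times \Omega Y$ is induced by $* \to X$, i.e.\ is $(*, \id)$, and the resulting pushout $(X \times \Omega Y)\sqcup_{\Omega Y} *$ is by definition $X \rtimes \Omega Y$ (cf.\ the pasting diagram following \Cref{symmon}).

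Once the fiber sequence $X \rtimes \Omega Y \to X \vee Y \overset{p}{\to} Y$ in $\C_*$ is established, looping gives a fiber sequence of group objects
\[
\Omega(X \rtimes \Omega Y) \longrightarrow \Omega (X \vee Y) \longrightarrow \Omega Y,
\]
in which the last map admits the section $\Omega \iota$. Applying \Cref{split} to this split fiber sequence in $\Grp(\C)$ immediately produces the natural equivalence $\Omega Y \times \Omega(X \rtimes \Omega Y) \longweak \Omega(X \vee Y)$.

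The main obstacle is the fiber computation: one must verify that the descent pushout actually matches the definition of the half-smash $X \rtimes \Omega Y$, including getting the gluing map right. The remaining steps — looping and invoking \Cref{split} — are formal.
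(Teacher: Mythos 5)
Your argument is correct and is essentially identical to the paper's: the paper also identifies $\fib(X\vee Y\to Y)\simeq X\rtimes \Omega Y$ by descent/universality of pushouts applied to the defining pushout of the wedge over $Y$ (computing the fibers $*$, $\Omega Y$, and $X\times\Omega Y$ columnwise), notes the section given by the wedge inclusion, and concludes with \Cref{split}. Your extra care in checking that the gluing map is $(*,\id)$ so that the pushout really is $X\rtimes\Omega Y$ is exactly the point the paper leaves implicit.
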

\begin{proof} The natural projection map $X\vee Y \longrightarrow Y$ fits into a fiber sequence $$
X \rtimes \Omega Y \longrightarrow X \vee Y \longrightarrow Y.
$$
Indeed, consider the following diagram $$
\begin{tikzcd}
* \ar[d]  & \Omega Y \ar[l]\ar[d] \ar[r] & X \times \Omega Y \ar[d] \\
Y \ar[d] & * \ar[r]\ar[d]\ar[l] & X\ar[d, "*"] \\
Y  & Y \ar[l, equals] \ar[r, equals] & Y 
\end{tikzcd}
$$
where the columns are fiber sequences. Universality of pushouts (or \Cref{descent}) implies that, taking colimits of the rows, we get a fiber sequence $X \ltimes \Omega Y \longrightarrow X \vee Y \longrightarrow Y$.
This sequence admits a section given by the canonical inclusion map $ Y \longrightarrow X \vee Y$. Hence $$
\Omega (X \rtimes \Omega Y) \longrightarrow \Omega (X \vee Y) \longrightarrow \Omega Y
$$
is a split fiber sequence of group objects in $\C$. The result follows from \ref{split}. 
\end{proof}

\begin{proposition}\label{half2}
Let $\C$ be an $\infty$-category with products and universal pushouts and $X,Y \in \C_*$ be pointed objects in $\C$. There is a natural equivalence $$
\Sigma X \rtimes Y \simeq \Sigma (X \vee (X\wedge Y)).
$$
\end{proposition}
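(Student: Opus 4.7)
The plan is to combine a Fubini-style $3\times 3$ argument with a Puppe-sequence splitting induced by a retract, decomposing the proof into two independent equivalences: $\Sigma X \rtimes Y \simeq \Sigma(X \rtimes Y)$, and $\Sigma(X \rtimes Y) \simeq \Sigma(X \vee (X\wedge Y))$.

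For the first equivalence, I would apply \Cref{fubini} to the $3\times 3$ diagram whose middle column is the constant $Y$ (with both structure maps being the identity), whose right column is $Y \xleftarrow{\pi_Y} X\times Y \xrightarrow{\pi_Y} Y$, whose left column is the constant $*$, whose middle row is $* \leftarrow Y \xrightarrow{(x_0,1)} X\times Y$, and whose outer rows are $* \leftarrow Y \xrightarrow{=} Y$. Computing row-colimits first produces the vertical span $* \leftarrow X\rtimes Y \to *$ with total colimit $\Sigma(X\rtimes Y)$. Computing column-colimits first (using universal pushouts to identify the right column's colimit with $\Sigma X \times Y$, via $\Sigma X \simeq \colim(* \leftarrow X \to *)$) produces the horizontal span $* \leftarrow Y \to \Sigma X \times Y$ with total colimit $\Sigma X \rtimes Y$ by the definition of the half-smash.

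For the second equivalence, I would first establish a cofiber sequence $X \xrightarrow{\iota} X \rtimes Y \xrightarrow{\pi} X \wedge Y$ in $\C_*$. Pasting the pushout $X \simeq \cof(Y \to X\vee Y)$ with the defining pushout of $X \rtimes Y$ identifies $X\rtimes Y$ with $X\coprod_{X\vee Y}(X\times Y)$, where $X \vee Y \to X$ is the wedge projection and $X \vee Y \to X\times Y$ the canonical inclusion. One more pasting with the cofiber sequence $X\vee Y \to X\times Y \to X\wedge Y$ then yields $\cof(\iota) \simeq X\wedge Y$. The universal property of the above pushout, applied to the compatible pair $(\id_X, \pi_X: X\times Y \to X)$ which agree on $X\vee Y$, furnishes a retract $r : X\rtimes Y \to X$ of $\iota$. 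Extending the cofiber sequence twice via Puppe (itself an iteration of \Cref{pasting}) yields $X\wedge Y \xrightarrow{\partial} \Sigma X \xrightarrow{\Sigma\iota} \Sigma(X\rtimes Y)$ as a cofiber sequence, so that $\Sigma(X\rtimes Y) \simeq \cof(\partial)$. Since $\Sigma\iota \circ \partial$ is null (being two consecutive maps in a cofiber sequence) and $\Sigma r \circ \Sigma \iota \simeq \id_{\Sigma X}$, we conclude $\partial \simeq (\Sigma r)(\Sigma\iota)(\partial) \simeq 0$, whence \Cref{cof} gives
$$\Sigma(X\rtimes Y) \simeq \cof(\partial) \simeq \Sigma(X\wedge Y) \vee \Sigma X \simeq \Sigma(X \vee (X\wedge Y)).$$

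The main obstacle is the careful pasting needed to extract both the cofiber sequence with retract and the iterated Puppe extension identifying $\Sigma(X\rtimes Y)$ with $\cof(\partial)$; each requires repeated use of \Cref{pasting} to verify that the relevant squares are coCartesian. Once the retract is visible from the pushout description of $X\rtimes Y$, however, the vanishing of the connecting map $\partial$ is a formal consequence of having a retract, and \Cref{cof} handles the splitting.
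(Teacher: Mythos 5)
Your proof is correct, but it follows a genuinely different route from the paper's. The paper handles the whole statement with a single application of \Cref{fubini}, to the $3\times 3$ diagram with rows $*\leftarrow X = X$, $\;Y\leftarrow X\times Y = X\times Y$ and $Y\leftarrow X\times Y\to Y$: the iterated row-colimit is $\cof(Y\to\Sigma X\times Y)\simeq\Sigma X\rtimes Y$, while the iterated column-colimit is $\cof(X\to X\star Y)$, which splits as $\Sigma X\vee\Sigma(X\wedge Y)$ by \Cref{join} and \Cref{cof} once one observes that the cone inclusion $X\to X\star Y$ is nullhomotopic. You instead factor the equivalence through the intermediate object $\Sigma(X\rtimes Y)$: a (different, and also correct) Fubini square gives $\Sigma X\rtimes Y\simeq\Sigma(X\rtimes Y)$, and the splitting of $\Sigma(X\rtimes Y)$ then comes from the cofiber sequence $X\to X\rtimes Y\to X\wedge Y$ together with the retraction induced by $\pi_X$ via the pushout description $X\rtimes Y\simeq X\coprod_{X\vee Y}(X\times Y)$, which forces the connecting map $\partial\colon X\wedge Y\to\Sigma X$ to be null. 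Your version buys the classical intermediate splitting $\Sigma(X\rtimes Y)\simeq\Sigma X\vee\Sigma(X\wedge Y)$ as a standalone statement, bypasses \Cref{join} entirely, and mirrors on the cofiber side the retraction argument of \Cref{split}; the price is the Puppe bookkeeping, where one should note that the second map of the extended cofiber sequence is a priori only $\Sigma\iota$ up to the inversion automorphism of $\Sigma X$ --- harmless here, since any map admitting a retraction up to an equivalence still annihilates $\partial$. The paper's single-diagram proof is slightly more economical but hides the intermediate object.
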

\begin{proof} Consider the commuting diagram : $$
\begin{tikzcd}
*  & X \ar[l] \ar[r, equals] & X  \\
Y \ar[d, equals]\ar[u] & X\times Y \ar[r, equals]\ar[d, equals]\ar[l]\ar[u] & X \times Y \ar[d]\ar[u] \\
Y  & X \times Y \ar[l] \ar[r] & Y 
\end{tikzcd}
$$
where all non-identity maps are projections. Taking colimits of the rows, we obtain $
* \longleftarrow Y \longrightarrow  \Sigma X \times Y
$
since $Y\times -$ preserves colimits, hence $$
\mathrm{colim}(Y \leftarrow X \times Y \longrightarrow Y ) \simeq Y \times \mathrm{colim} (* \leftarrow X \longrightarrow *) \simeq Y \times \Sigma X .
$$
Taking colimits of the columns, we get $* \longleftarrow X \longrightarrow \Sigma X\wedge Y$ by \ref{join}. Since $X\longrightarrow \Sigma X\wedge Y$ is nullhomotopic, its cofiber is canonically equivalent to $\Sigma X \vee (X * Y)\simeq \Sigma (X\vee (X\wedge Y))$ by \ref{cof}. Applying \ref{fubini}, we obtain the result.
\end{proof}

\begin{remark}
The above formula can also be obtained by noticing that in the following diagram 
$$
\begin{tikzcd}
X\times Y \arrow[dr, phantom, "\scalebox{1}{$\ulcorner$}" , very near end, color=black] \ar[r]\ar[d] & X \ar[d] \ar[r] & \Sigma X \rtimes Y \ar[d, "\sim" sloped] \\
 Y \ar[r] &  \Sigma  X \wedge Y  \ar[r] & \Sigma ( X \vee (X \wedge Y))
\end{tikzcd}
$$ 
both horizontal composites are cofiber sequences, and since the left square is a pushout it induces an equivalence on cofibers.
\end{remark}

The following statement is an immediate consequence of \ref{half1} and \ref{half2}. It is a variant of the fundamental Hilton-Milnor splitting \cite[Theorem~3.1]{EHP}.

\begin{corollary}\label{split1}
Let $\C$ be an $\infty$-category with products and universal pushouts and $X,Y \in \C_*$ be pointed objects in $\C$. There is a natural equivalence of pointed objects
$$
\Omega \Sigma (X\vee Y) \simeq \Omega \Sigma X \times \Omega \Sigma \big( Y \vee (Y \wedge \Omega \Sigma X) \big).
$$
\end{corollary}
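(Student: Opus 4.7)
The plan is to chain together \Cref{half1} and \Cref{half2}, with a preliminary use of the fact that $\Sigma$ preserves wedges. Since $\Sigma \dashv \Omega$ as functors on pointed objects (the universal property of the pushout defining $\Sigma$ gives precisely the adjunction), $\Sigma$ is a left adjoint and therefore commutes with all colimits, in particular with finite coproducts. Hence there is a natural equivalence $\Sigma(X \vee Y) \simeq \Sigma X \vee \Sigma Y$ in $\C_*$.

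I then apply \Cref{half1} to the wedge $\Sigma Y \vee \Sigma X$ (substituting $\Sigma Y$ for $X$ and $\Sigma X$ for $Y$ in the statement of \ref{half1}). This gives a natural equivalence
$$
\Omega(\Sigma Y \vee \Sigma X) \simeq \Omega \Sigma X \times \Omega(\Sigma Y \rtimes \Omega \Sigma X).
$$
Combining with the previous step, and using the symmetry of the wedge, we obtain
$$
\Omega \Sigma (X \vee Y) \simeq \Omega \Sigma X \times \Omega(\Sigma Y \rtimes \Omega \Sigma X).
$$

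Finally, I apply \Cref{half2} to the half-smash $\Sigma Y \rtimes \Omega \Sigma X$, substituting $Y$ for $X$ and $\Omega \Sigma X$ for $Y$ in the statement, to get
$$
\Sigma Y \rtimes \Omega \Sigma X \simeq \Sigma \bigl( Y \vee (Y \wedge \Omega \Sigma X) \bigr),
$$
and therefore $\Omega(\Sigma Y \rtimes \Omega \Sigma X) \simeq \Omega \Sigma(Y \vee (Y \wedge \Omega \Sigma X))$. Plugging this back in yields the desired equivalence. There is no real obstacle here — the proof is purely a matter of bookkeeping to match the roles of the two variables in each lemma; the only mild care needed is in verifying that the hypotheses on $\C$ (finite products and universal pushouts) suffice for both invocations, which they do since \Cref{half1} and \Cref{half2} are stated under exactly these assumptions.
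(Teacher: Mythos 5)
Your proof is correct and is exactly the argument the paper intends: the paper states \Cref{split1} as an immediate consequence of \Cref{half1} and \Cref{half2}, and your chaining of $\Sigma(X\vee Y)\simeq \Sigma X\vee \Sigma Y$, then \Cref{half1} applied to $\Sigma Y\vee \Sigma X$, then \Cref{half2} applied to $\Sigma Y\rtimes \Omega\Sigma X$, is the intended bookkeeping. No issues.
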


We can now state the fundamental splitting formula, whose iteration yields Hilton-Milnor's theorem. Note that the connectivity asssumption on $X$ and $Y$ is essential to apply James' splitting $\ref{james}$.

\begin{proposition}\label{fundamental}
Let $\E$ be an $\infty$-topos, and $X, Y\in \E_*$ be pointed connected objects. Then there is a natural equivalence of pointed objects :$$
\Omega \Sigma (X \vee Y ) \simeq \Omega \Sigma X \times \Omega \Sigma \big( \bigvee_{i=0}^\infty Y \wedge X^{\wedge i} \big).
$$
\end{proposition}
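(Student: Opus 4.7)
The plan is to build on \Cref{split1}, which already supplies the natural equivalence
$$\Omega \Sigma(X \vee Y) \simeq \Omega \Sigma X \times \Omega \Sigma\bigl(Y \vee (Y \wedge \Omega \Sigma X)\bigr).$$
It therefore suffices to produce a natural equivalence
$$\Sigma\bigl(Y \vee (Y \wedge \Omega \Sigma X)\bigr) \simeq \Sigma \bigvee_{i=0}^{\infty}(Y \wedge X^{\wedge i})$$
and then apply $\Omega$ and combine with the identity above.

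First I would distribute the outer suspension across the wedge, and then use the natural equivalence $\Sigma(A \wedge B) \simeq A \wedge \Sigma B$ established immediately after \Cref{symmon} to rewrite $\Sigma(Y \wedge \Omega\Sigma X) \simeq Y \wedge \Sigma \Omega \Sigma X$. This prepares the ground for the essential homotopical input: James' splitting (\Cref{james}), which applies because $X$ is connected and $\E$ is an $\infty$-topos, and yields $\Sigma \Omega \Sigma X \simeq \bigvee_{i\geq 1} \Sigma X^{\wedge i}$.

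Next, smashing with $Y$ and using that the smash product preserves colimits in each variable (\Cref{symmon}, which is available since every $\infty$-topos is Cartesian closed), the functor $Y \wedge -$ distributes over the wedge and gives $Y \wedge \Sigma \Omega \Sigma X \simeq \bigvee_{i\geq 1} \Sigma(Y \wedge X^{\wedge i})$. Reincorporating the leftover $\Sigma Y$ summand as the $i=0$ term, using that $X^{\wedge 0}$ is the smash unit $S^0$ so that $\Sigma Y \simeq \Sigma(Y \wedge X^{\wedge 0})$, produces $\bigvee_{i=0}^\infty \Sigma(Y \wedge X^{\wedge i})$, and pulling $\Sigma$ out of this infinite wedge (it is a left adjoint, hence preserves coproducts) delivers the target equivalence.

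The main subtlety I anticipate is purely bookkeeping: tracking naturality in $X$ and $Y$ through each of the intermediate equivalences. All the genuinely homotopical content is concentrated in the single appeal to James' splitting; everything else is formal manipulation with left adjoints and with the closed symmetric monoidal structure on $\E_*$. The infinite wedge $\bigvee_{i=0}^\infty (Y \wedge X^{\wedge i})$ exists because $\E$ is presentable, and since $\Sigma$ and $Y\wedge-$ preserve this coproduct, no further connectivity estimate is required beyond the one already built into \Cref{james}.
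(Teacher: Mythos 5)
Your proposal is correct and follows essentially the same route as the paper: apply \Cref{split1}, rewrite $\Sigma(Y\wedge\Omega\Sigma X)\simeq Y\wedge\Sigma\Omega\Sigma X$, invoke James' splitting (\Cref{james}) on $\Sigma\Omega\Sigma X$, and distribute $Y\wedge-$ and $\Sigma$ over the wedge, absorbing $\Sigma Y$ as the $i=0$ term. The paper's proof is exactly this three-line chain of equivalences.
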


\textit{Proof:} First apply Corollary \ref{split1} above, then use \ref{james} to split $\Sigma \Omega \Sigma X$ :
$$
\begin{aligned}
\Omega \Sigma (X \vee Y ) & \simeq \Omega \Sigma X \times \Omega (\Sigma Y \vee  (Y \wedge \Sigma \Omega \Sigma  X)) \\
& \simeq  \Omega \Sigma X \times \Omega (\Sigma Y \vee  (Y \wedge \bigvee_{i\geq1} \Sigma X^{\wedge i}))  \\
& \simeq \Omega \Sigma X \times \Omega \Sigma \big( \bigvee_{i\geq 0} Y \wedge X^{\wedge i} \big). \:\:\:\: \square
\end{aligned} 
$$
% The proof of \cref{HM} proceeds by iterating the above formula. 

\begin{remark}
    \Cref{split1} bears strong resemblance to the `fundamental Hilton-Milnor splitting' \cite[Theorem~3.1]{EHP}, which takes the form $$
    \Omega \Sigma (X\vee Y)\simeq \Omega \Sigma X \times \Omega \Sigma Y \times \Omega \Sigma(\Omega \Sigma X \wedge \Omega \Sigma Y).
    $$
    As opposed to ours, this formula is symmetric in $X$ and $Y$, as well as the corresponding `general Hilton-Milnor splitting' \cite[Proposition~4.21]{EHP}. Though the proofs of both statements are very similar to ours, the asymmetric formulas are better suited to our purposes, given the iterative definition of Hall words \Cref{hall_dfn}. 
\end{remark}

% \textcolor{red}{I would be tempted to write an algebraic section with subsection 4.1, and then have a final section with the main theorem, which I would start with subsection 3.4 because these fundamental splitting results prepare for the proof and you include your way to prove them (and your way to state them.}

\subsection{Main theorem}

Now that we have introduced all the necessary preliminaries, we are in a position to formulate and prove Hilton-Milnor's theorem in an $\infty$-topos. For the sake of completeness, we define the notion of weak product.

\begin{definition}\label{weakproduct}
    Let $\C$ be an $\infty$-category which has finite products and sequential colimits. Given a sequence $(X_n)_{n\geq 0}$ of pointed objects in $\C$, their \textit{weak product}
    $$
    \widetilde\prod_{i\geq 0} X_i
    $$
    is defined as the colimit of the sequential diagram of finite products 
    $$
    \begin{tikzcd}
        X_0 \ar[r, "{(\id, *)}"] & X_0 \times X_1 \ar[r] & \cdots \ar[r] & \prod_{i<n} X_i \ar[r, "{(\id, *)}"] & \prod_{i\leq n} X_i \ar[r] & \cdots
    \end{tikzcd}
    $$    
\end{definition}

We now come to the main theorem of this note.

\begin{theorem}\label{HM}
    Let $X_1,\cdots , X_n \in \E_*$ be pointed connected objects in an $\infty$-topos $\E$ and let $B(n)$ be a Hall basis on $n$ symbols $x_1, \cdots , x_n$. 
    
    Then there is a canonical equivalence of pointed objects $$
    \Omega \Sigma( X_1 \vee \cdots \vee  X_n) \simeq \widetilde\prod_{w\in B(n)} \Omega \Sigma w(X_1, \cdots ,X_n). 
    $$
\end{theorem}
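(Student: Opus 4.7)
The plan is to iterate the fundamental splitting \Cref{fundamental} in a way that exactly mirrors the recursive Hall basis construction \Cref{hall_dfn}. Write $\vec X = (X_1, \dots, X_n)$, and let $(w_k)_{k \geq 1}$ be the enumeration of $B(n)$, so that by construction $w_{k+1} = \min L_k$ for all $k \geq 0$. The first step is to establish, by induction on $N \geq 0$, a natural equivalence
$$
\Omega\Sigma(X_1 \vee \cdots \vee X_n) \simeq \prod_{k=1}^{N} \Omega\Sigma w_k(\vec X) \times \Omega\Sigma\bigg(\bigvee_{w \in L_N} w(\vec X)\bigg).
$$
The base case $N = 0$ is tautological since $L_0 = \{x_1, \dots, x_n\}$. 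For the inductive step, the wedge $\bigvee_{w \in L_N} w(\vec X)$ is connected: each $w(\vec X)$ is a smash product of connected objects, hence connected by \Cref{connprop}, and wedges of connected objects are connected. One may then apply \Cref{fundamental} with $X = w_{N+1}(\vec X) = \min L_N$ (which is itself connected) and $Y = \bigvee_{w \in L_N \setminus \{w_{N+1}\}} w(\vec X)$.

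The second step is to identify the tail of the resulting splitting with the wedge indexed by $L_{N+1}$. Since $\wedge$ preserves colimits in each variable (\Cref{symmon}), it distributes over wedges, yielding
$$
\bigvee_{i \geq 0} Y \wedge w_{N+1}(\vec X)^{\wedge i} \simeq \bigvee_{\substack{i \geq 0 \\ w \in L_N \setminus \{w_{N+1}\}}} \bigl(\ad(w_{N+1})^{i}(w)\bigr)(\vec X) \simeq \bigvee_{w' \in L_{N+1}} w'(\vec X),
$$
where the first equivalence invokes \Cref{words}, and the second is the definition of $L_{N+1}$ in \Cref{hall_dfn}. This completes the induction.

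The final step is to pass to the colimit as $N \to \infty$. The inductive splittings provide compatible inclusions $P_N := \prod_{k=1}^N \Omega\Sigma w_k(\vec X) \hookrightarrow \Omega\Sigma(X_1 \vee \cdots \vee X_n)$ (placing the basepoint in the complementary tail factor), and these intertwine with the structure maps of \Cref{weakproduct}. They therefore assemble into a canonical comparison map
$$
\widetilde\prod_{w \in B(n)} \Omega\Sigma w(\vec X) \longrightarrow \Omega\Sigma(X_1 \vee \cdots \vee X_n).
$$
To see this is an equivalence, the complementary factor $R_N = \Omega\Sigma(\bigvee_{w \in L_N} w(\vec X))$ must become arbitrarily highly connected. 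Since $B(n)$ contains only finitely many words of each length, for any $L$ there is some $N_L$ such that every $w \in L_{N_L}$ has $\ell(w) \geq L$; combining \Cref{connprop}(1), (2), (3) and (4) shows $R_N$ is at least $(L-1)$-connected for $N \geq N_L$, so by \Cref{inftyconnected} the comparison map is $\infty$-connected. The main obstacle I anticipate is the passage from $\infty$-connectivity to a genuine equivalence in a general (possibly non-hypercomplete) $\infty$-topos; closing this gap should require a more careful analysis exploiting that each $P_N \hookrightarrow \Omega\Sigma(\bigvee X_i)$ is a \emph{split} inclusion with an explicit retraction, so that the colimit decomposes compatibly with the product structure $P_N \times R_N$ rather than relying purely on a Whitehead-type principle.
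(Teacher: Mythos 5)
Your construction of the comparison map and the proof that it is $\infty$-connected coincide with the paper's argument: the same induction on the Hall basis via \Cref{fundamental}, the same identification of the tail wedge with $L_{N+1}$ using distributivity of $\wedge$ over colimits, and the same connectivity estimate (words in $L_N$ have length bounded below by $\ell(\min L_N)\to\infty$, so \Cref{connprop}, \Cref{connlemma}, \Cref{connpullback} and \Cref{inftyconnected} give $\infty$-connectivity). Up to that point the proposal is correct and is essentially the paper's proof.

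The genuine gap is the last step, which you correctly flag but do not close, and the workaround you sketch will not close it. Exploiting the split inclusions $P_N\hookrightarrow J(X_1\vee\cdots\vee X_n)$ with their retractions and the fact that filtered colimits commute with finite products only yields a decomposition $J(X_1\vee\cdots\vee X_n)\simeq \widetilde\prod_{w\in B(n)}Jw(\vec X)\times JR_\infty$, where $JR_\infty=\colim_N J\big(\bigvee_{w\in L_N}w(\vec X)\big)$ is the colimit of the tails. You then still have to prove that $JR_\infty$ is contractible, and all the connectivity argument gives is that $*\to JR_\infty$ is $\infty$-connected --- which in a non-hypercomplete $\infty$-topos does not imply contractibility. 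So the split structure merely relocates the Whitehead-type issue; it does not remove it. The paper resolves this by a different device: choose a presentation $L:\PSh(\C)_*\rightleftarrows\E_*$ as an accessible left-exact localization (\Cref{presentability}), lift the $X_i$ to pointed connected objects $X_i'$ of $\PSh(\C)_*$ using \cite[Remark~6.5.1.15]{HTT}, run the entire construction there, where $\PSh(\C)$ \emph{is} hypercomplete so that $\infty$-connectivity of $h(X_1',\dots,X_n')$ does give an equivalence, and then observe that $L$, being left exact and colimit-preserving, commutes with $J$, finite products, the tower, and hence with the formation of $h$, so that $h(X_1,\dots,X_n)\simeq Lh(X_1',\dots,X_n')$ is an equivalence. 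Some argument of this kind (or an outright hypercompleteness hypothesis) is needed to finish your proof.
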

\begin{proof} 

We divide the proof into the following steps: after setting up some notation, we construct a map that will realize the desired equivalence. We then show that this map is $\infty$-connected. Finally we show that the map is an equivalence.

\begin{step} We will be using the following notation throughout the course of the proof.
    \begin{enumerate}
        \item Given our choice of Hall basis $B=B(n)$, we denote by $L_r$ the set of elements of $B$ of rank $r$. We fix an order on $L_r$ compatible with wordlength, so that $x_{r+1}=\mmin(L_r)$.
        \item Consistently with \Cref{words}, we write $X_i=x_i(X_1, \cdots, X_n)$ for the object of $\E_*$ obtained by smashing $X_1, \cdots, X_n$ in the order corresponding to the word $x_i$.
        \item For each $r\geq 0$, we set $$
            R_r=\bigvee_{w \in L_r} w(X_1, \cdots , X_n).
        $$
        In particular $R_0=X_1 \vee \cdots \vee X_n$.
        \item For brevity of notation, we write $J=\Omega \Sigma$ in reference to the James construction \cite{james}.
    \end{enumerate}
\end{step}

\begin{step}
We now proceed to construct a map $h:\widetilde\prod_{w\in B} Jw \longrightarrow JR_0.$

For each $r\geq 1$, \Cref{fundamental} provides an equivalence
$$\begin{aligned}
    JR_{r-1} &= J\bigg( X_{r} \vee \: \bigvee_{x_r \ne w\in L_{r-1}} w(X_1, \cdots , X_n) \bigg) \\
    & \simeq JX_{r} \times J \bigg(\: \bigvee_{i\geq 0 \:,\: w \in L_r\setminus \{x_{r}\}}  w(X_1, \cdots , X_n) \wedge X_{r}^{\wedge i} \bigg) \\
    & = JX_{r} \times JR_{r}
\end{aligned}
$$
which we denote $\varphi_r:JX_{r} \times JR_r\longweak JR_{r-1}$. Consider the trivially commuting square $$
\begin{tikzcd}
    * \ar[r] \ar[d] & JX_r \ar[d] \\
    JR_{r-1} & JX_r \times JR_r \ar[l, "\varphi_r"'] 
\end{tikzcd}
$$ 
in which the right vertical map is the inclusion of the first summand. Applying $\prod_{i<r}JX_i \times -$ to this diagram yields the square $$
\begin{tikzcd}
    \prod_{i<r}JX_i \ar[r] \ar[d] & \prod_{i\leq r} JX_i \ar[d] \\
    \prod_{i<r}JX_i \times JR_{r-1} & \prod_{i\leq r}JX_i \times JR_r \ar[l, "\id\times\varphi_r"']
\end{tikzcd}
$$
where the bottom map $(\id\times \varphi_{r})$ is an equivalence since it is a pullback of $\varphi_r$. These squares assemble into the following diagram $$
\label{tower}
\begin{tikzcd}
 \cdots \ar[rr] && \prod_{i<r} JX_i \ar[rr]\ar[d] && \prod_{i\leq r} JX_i \ar[rr]\ar[d] && \cdots \\
\cdots  && \prod_{i<r} JX_i \times JR_{r-1} \ar[ll, "\id\times \varphi_{r-1}"']\ar[d, "\sim" sloped] && \prod_{i\leq r} JX_i \times JR_r \ar[ll, "\id\times \varphi_{r}"']\ar[d, "\sim" sloped] && \cdots \ar[ll, "\id\times \varphi_{r+1}"'] \\
\cdots \ar[rr, equals] &&  JR_0 \ar[rr, equals] && JR_0 \ar[rr, equals] && \cdots 
\end{tikzcd}
$$
in which the bottom vertical maps $\prod_{i\leq r}JX_i \times JR_r \longweak JR_0$ are the composite equivalences $$
(\id\times \varphi_{1})\circ \cdots \circ (\id\times \varphi_{r}).
$$
Disregarding the maps $\id\times \varphi_{r}$ in the diagram above, we obtain a tower whose induced maps on the colimits we denote $$
h=h(X_1, \cdots X_n) : \widetilde\prod_{i\geq 1} JX_i \longrightarrow JR_0.
$$
    
\end{step}

\begin{step}
Now we show that $h$ is $\infty$-connected. To this end, we estimate connectivity of the maps involved in the square 
$$
\begin{tikzcd}
    \prod_{i<r} JX_i \ar[r] \ar[d] & \prod_{i\leq r} JX_i \ar[d] \\
    JR_0 \ar[r, equals] & JR_0.
\end{tikzcd}
$$
Recall that $X_r$ was defined as $x_r(X_1, \cdots X_n)$ where $x_r$ is the $r$-th word in our choice of Hall basis. The number $k_r$ of generators involved in the word $x_r$ tends to $\infty$ with $r$. Recall that $x_r=\min(L_{r-1})$ and that the order on $L_{r-1}$ is compatible with wordlength. Since all the $X_i$ are connected, we have $$
\conn(Jw(X_1, \cdots , X_n)) \geq \conn(JX_r) \geq k_r-1 
$$
for all $w\in L_{r-1}$ by \Cref{connprop}. Then by \Cref{connlemma} the maps $*\to JX_r$ and $* \to JR_r$ are both $(k_r-1)$-connected. By \Cref{connpullback} we find $$
\conn\bigg(\prod_{i<r}JX_i \to \prod_{i<r}JX_i \times JR_{r-1}  \weak JR_0 \bigg) \geq \conn\bigg(\prod_{i<r}JX_i \to \prod_{i\leq r}JX_i\bigg) \geq k_r-1.
$$
At this point, \Cref{inftyconnected} allows us to conclude that $h$ is indeed $\infty$-connected.
\end{step}

\begin{step}
To conclude the proof, we use the following trick from \cite{EHP}. Pick a presentation of $\E$ as an accessible left-exact localization of a presheaf $\infty$-topos, and denote by
\[
\begin{tikzcd}
L: \PSh (\C)_* \ar[r, shift left] &  \E_*\ar[l, shift left, hook] :i
\end{tikzcd}
\]
the localization induced by \Cref{presentability} on categories of pointed objects. By \cite[Remark~6.5.1.15]{HTT} we can find pointed connected objects $X'_i \in \PSh (\C)_*$ so that $X_i \simeq LX'_i$ for $i=1, \cdots, n$. The argument above shows that the map $$
h':=h(X'_1, \cdots , X'_n) : \widetilde\prod_{i\geq 1} JX'_i \longrightarrow JR'_0
$$
is $\infty$-connected in $\PSh(\C)$. But $\PSh(\C)$ is a hypercomplete topos, hence this map is an equivalence. Now from the construction of $h(X'_1, \cdots, X'_n)$ as the map induced on colimits by the tower \eqref{tower} (with $X_i$ replaced by $X'_i$), we see that $$
Lh(X'_1, \cdots, X'_n)\simeq h(LX'_1, \cdots, LX'_n)\simeq h(X_1, \cdots, X_n).
$$
This follows from the fact that $L$ is left-exact, hence commutes with finite products and the functor $J$. But since $h'$ is an equivalence, $h\simeq Lh'$ is also an equivalence and the proof is complete.
\end{step}
\end{proof}

% {\color{green} TODO : \begin{itemize}
%     \item (ref for) commutativity and associativity of smash
%     \item explain intuition coming from nilpotent groups + $\Omega \Sigma$ being the `free group functor' $\E \longrightarrow \Grp(\E)$.
%     \item check if half-smash products are all in the right direction
%     \item Fix section on conventions
%     \item topos $\Rightarrow$ universal pushouts
%     \item group objects $\simeq$ pointed connected objects in a topos
%     \item $n$-connected $\Rightarrow$ $(n-1)$-connected
%     \item $n$-connected maps stable under pullback
% \end{itemize} }

\bibliographystyle{alpha}
\bibliography{refs}

\begin{thebibliography}{ABFJ20}

\bibitem[ABFJ20]{ABJF20}
Mathieu Anel, Georg Biedermann, Eric Finster, and André Joyal.
\newblock A generalized {Blakers–Massey} theorem.
\newblock {\em Journal of Topology}, 13(4):1521–1553, Sep 2020.

\bibitem[Arn16]{abstractmotivic}
Peter Arndt.
\newblock {\em Abstract motivic homotopy theory}.
\newblock PhD thesis, Universität Osnabrück, 2016.

\bibitem[Ber07]{bergnermodels}
Julia~E. Bergner.
\newblock Three models for the homotopy theory of homotopy theories.
\newblock {\em Topology}, 46(4):397--436, 2007.

\bibitem[DH21]{EHP}
Sanath Devalapurkar and Peter Haine.
\newblock On the {J}ames and {H}ilton-{M}ilnor splittings, and the metastable {EHP} sequence.
\newblock {\em Doc. Math.}, 26:1423--1464, 2021.

\bibitem[Gan65]{Ganea}
T.~Ganea.
\newblock A generalization of the homology and homotopy suspension.
\newblock {\em Comment. Math. Helv.}, 39:295--322, 1965.

\bibitem[Gla16]{gla}
Saul Glasman.
\newblock Day convolution for {$\infty$}-categories.
\newblock {\em Math. Res. Lett.}, 23(5):1369--1385, 2016.

\bibitem[Hal50]{Hall1950ABF}
Marshall Hall, Jr.
\newblock A basis for free {L}ie rings and higher commutators in free groups.
\newblock {\em Proc. Amer. Math. Soc.}, 1:575--581, 1950.

\bibitem[Hil55]{hiltonoriginal}
P.~J. Hilton.
\newblock On the homotopy groups of the union of spheres.
\newblock {\em J. London Math. Soc.}, 30:154--172, 1955.

\bibitem[Jam55]{james}
I.~M. James.
\newblock Reduced product spaces.
\newblock {\em Ann. of Math. (2)}, 62:170--197, 1955.

\bibitem[JT07]{joyaltierney}
Andr\'{e} Joyal and Myles Tierney.
\newblock Quasi-categories vs {S}egal spaces.
\newblock In {\em Categories in algebra, geometry and mathematical physics}, volume 431 of {\em Contemp. Math.}, pages 277--326. Amer. Math. Soc., Providence, RI, 2007.

\bibitem[Lur09]{HTT}
Jacob Lurie.
\newblock {\em Higher topos theory}, volume 170 of {\em Annals of Mathematics Studies}.
\newblock Princeton University Press, Princeton, NJ, 2009.

\bibitem[Lur17]{HA}
Jacob Lurie.
\newblock Higher algebra.
\newblock Unpublished. Available online at \url{https://www.math.ias.edu/~lurie/}, 2017.

\bibitem[MAS72]{milnor_fk}
J.~W. Milnor, J.~F. Adams, and G.~C. Shepherd.
\newblock {\em On the construction FK}, page 118–136.
\newblock London Mathematical Society Lecture Note Series. Cambridge University Press, 1972.

\bibitem[Mat76]{mather_1976}
Michael Mather.
\newblock Pull-backs in homotopy theory.
\newblock {\em Canadian Journal of Mathematics}, 28(2):225–263, 1976.

\bibitem[Nei10]{neisendorfer_2010}
Joseph Neisendorfer.
\newblock {\em Algebraic Methods in Unstable Homotopy Theory}.
\newblock New Mathematical Monographs. Cambridge University Press, 2010.

\bibitem[Ras18]{elementary}
Nima Rasekh.
\newblock A theory of elementary higher toposes, 2018.
\newblock ArXiv preprint \url{ arXiv:1805.03805v3}.

\bibitem[Rez10]{rezktopos}
Charles Rezk.
\newblock Toposes and homotopy toposes (version 0.15).
\newblock 01 2010.

\bibitem[TV05]{TV}
Bertrand To\"{e}n and Gabriele Vezzosi.
\newblock Homotopical algebraic geometry. {I}. {T}opos theory.
\newblock {\em Adv. Math.}, 193(2):257--372, 2005.

\bibitem[Whi78]{bookwhitehead}
George~W. Whitehead.
\newblock {\em Elements of homotopy theory}, volume~61 of {\em Graduate Texts in Mathematics}.
\newblock Springer-Verlag, New York-Berlin, 1978.

\end{thebibliography}

\end{document}